\newtheorem{Thm}{Theorem}
\newtheorem*{Thm*}{Theorem}
\newtheorem{Lemma}{Lemma}
\newtheorem{Corollary}{Corollary}
\def\com#1{\quad\text{#1}\quad}
\def\mc{\mathcal}
\def\tm{t_{-\infty}}
\def\tp{t_{+\infty}}
\def\tpm{t_{\pm\infty}}
\def\Um{U_{-\infty}}
\def\Up{U_{+\infty}}
\def\Upm{U_{\pm\infty}}
\def\NK{{N_{\mc K}}}
\def\ep{\epsilon_\phi}
\begin{document}

\title[Inversion of a Non-uniform Difference Operator]%
{Inversion of a Non-uniform\\ Difference Operator}

\author{Blake Temple}
\address{Department of Mathematics,
  University of California, Davis, CA 95616}

\author{Robin Young}
\address{Department of Mathematics and Statistics,
  University of Massachusetts, Amherst, MA 01003}
\date{\today}

\begin{abstract}
The problem of applying Nash-Moser Newton methods to obtain periodic
solutions of the compressible Euler equations has led authors to
identify the main obstacle, namely, how to invert operators which
impose periodicity when they are based on non-uniform shift operators.
Here we begin a theory for finding the inverses of such operators by
proving that a scalar non-uniform difference operator does in fact have a
bounded inverse on its range.  We argue that this is the simplest
example which demonstrates the need to use direct rather than Fourier
methods to analyze inverses of linear operators involving nonuniform
shifts.
\end{abstract}

\maketitle

\section{Introduction}

In this article we explicitly invert the operator which imposes
periodicity for a scalar shift, namely,
\[
\Delta_\Phi v:= \mathcal{S}_{\Phi}v-v=w,
\]
where the shift operator $\mathcal{S}_{\Phi}$ is defined by
\begin{equation}
\label{PeriodicShift2}
\mathcal{S}_{\Phi} v = v\circ\Phi, \com{so that}
\mathcal{S}_{\Phi} v(t) = v(\Phi(t)),
\end{equation}   
and where the non-uniform shift $\Phi(t)$ has the form
\begin{equation}\label{PeriodicShift3}
\Phi(t)=t+\alpha\phi(t).
\end{equation} 
We call $\phi$ the perturbation and $\alpha$ the amplitude, and say
the perturbation is non-degenerate if it is Lipschitz continuous, and
satisfies the further conditions
\[
\phi'(t_*) \ne 0 \com{whenever} \phi(t_*) = 0,
\]
and if at any zero $t_*$ of $\phi$, we have the Taylor estimate
\begin{equation}\label{taylor}
\phi(t) - \phi(t_*) - \phi'(t_*)\,(t-t_*) = O\big((t-t_*)^2\big).
\end{equation}
This is the condition that $\phi$ be better than differentiable, but
not quite twice differentiable, at each zero.  In particular, any
perturbation $\phi$ which is approximately sinusoidal is non-degenerate.

We introduce the problem of inverting $\mathcal{S}_{\Phi}-\mc I$ as a
warmup problem for inverting the linearized operators associated with
the nonlinear operators in \cite{tempyo1,tempyo2,tempyo3,tempyo4},
which impose periodicity for the compressible Euler equations.  In
this case genuine nonlinearity enters as $\phi(t)=\sin(t)+O(\alpha)$
for small $\alpha$.  For motivation, note that the time one evolution
map of the nonlinear Burgers equation
\[
u_t+u\,u_x=0
\]
is a non-uniform shift of the form (\ref{PeriodicShift2}), so a
time-periodic solution would satisfy $\Delta_\Phi v=0$.  Since Burgers
admits no smooth time-periodic solutions, we expect the linear
difference operator $\Delta_\Phi = \mathcal{S}_{\Phi}-\mc I$ to admit
no smooth periodic solutions as well.  Here we address the question as
to whether the inverse $\Delta_\Phi^{-1}$ is bounded.
In \cite{tempyo1,tempyo2}, the authors derive a $2\times2$ linearized
system of the form $\mc S_\Phi - \mc J$, where $\mc S_\Phi$ is a
diagonal shift operator and $\mc J$ is a fixed constant linear
operator; in this case, the shifts have the form
\[
\phi(t) = \sin(t) + O(\alpha), \com{so that}
\Phi(t) = t + \alpha\,\sin(t) + O(\alpha^2).
\]
Inversion of the operator $\mc S_\Phi-\mc J$ is a critical step in the
proof of existence of space- and time-periodic solutions of the
compressible Euler equations by Nash-Moser methods, but the requisite
estimates for the inverses of such operators involving shifts is
beyond current mathematical technology.  The authors propose the
analysis of $\Delta_\Phi$ here as the first step in a program to
develop a mathematical framework for inverting linearized shift
operators in general.  The equation $\Delta_\Phi v=0$ is an example of
an iterative functional equation such as those treated in \cite{kucz}
from a different point of view.

When $\phi(t)=0$, $t$ is a rest point of $\Phi$, and it follows that
the interval between any two consecutive roots of $\phi(t)$ is mapped
to itself under $\Phi$, provided $\alpha$ is small enough.  Denoting
these roots by $\tpm$, it follows that
\begin{equation}
\label{phicond}
\phi(t_{-\infty})=0=\phi(t_{+\infty}), \com{and}
\phi(t)\neq0,\ \ t\in(t_{-\infty},t_{+\infty}),
\end{equation}
and in particular $\Phi:[\tm,\tp]\to[\tm,\tp]$, and
\[
\mathcal{S}_{\Phi}: L^{\infty}{[t_{-\infty},t_{+\infty}]}\to
L^{\infty}{[t_{-\infty},t_{+\infty}]}.
\]

Thus to isolate the essential issue, we address the case when the
domain of $t$ is the interval $[t_{-\infty},t_{+\infty}]$, and $\Phi$
is a monotonic map of this interval to itself.  We introduce a
framework to invert the operator $\mathcal{S}_{\Phi}-\mc I$ on its
range in subspaces of $L^\infty[t_{-\infty},t_{+\infty}]$.  In
particular, we derive estimates for the inverse of $\Delta_{\Phi}$ in
$C^{p}[t_{-\infty},t_{+\infty}]$, the subspace of
$L^{\infty}[t_{-\infty},t_{+\infty}]$ consisting of functions with $p$
continuous derivatives, under the assumption that $\alpha$ is
sufficiently small.

We begin by proving that the operator $\mathcal{S}_{\Phi}-\mc I$
admits a large class of solutions in $L^{\infty}[\tm,\tp]$ in its
kernel, and we characterize them.  However, if we require continuity
at either endpoint $t=t_{\pm\infty}$, we show that there are no
non-constant elements of the kernel.  We then obtain a formula for the
inverse $\Delta_\Phi^{-1}$ by directly solving
$\mathcal{S}_{\Phi}v-v=w$ for $v$ in terms of $w$.  The resulting
formula leads to a solvability condition for $w$, namely,
\begin{equation}
\label{PS4}
\left|\sum_{k\geq0}w(\Phi^kt)\right|<\infty,\quad
\left|\sum_{k\geq0}w(\Phi^{-k}t)\right|<\infty,\quad
\sum_{k}w(\Phi^kt)=Const,
\end{equation} 
for every $t\in[t_{-\infty},t_{+\infty}]$.  We show that condition
\eqref{PS4} characterizes the image of $\mathcal{S}_{\Phi}-\mc I$, and
the image of $\Delta_\Phi$ in the space of Lipshitz continuous
functions consists of precisely those Lipshitz continuous functions
$w$ that meet condition \eqref{PS4}.  Moreover, we prove a bound on
the inverse of $\mathcal{S}_{\Phi}-\mc I$.  We denote the space of
Lipshitz continuous functions by $C^{0,1}[\tm,\tp]$, with norm
\[
\|v\|_{C^{0,1}}=\|v\|_{Lip}+\|v\|_{Sup},
\]
where $\|v\|_{Lip}$ is the minimal Lipshitz constant.  As long as the
perturbation $\phi$ is Lipshitz, the map $\Delta_\Phi=\mc S_\Phi-\mc
I$ is a bounded map from $C^{0,1}[\tm,\tp]$ to itself, and the
following theorem describes its inverse.

\begin{Thm*}
  Assume that $\phi(t)$ satisfies (\ref{taylor}) and (\ref{phicond}).
  Then there exist constants $\alpha_{\phi}$ and $K_{\phi}$, given
  explicitly in terms of $\phi$, such that, if $\alpha<\alpha_{\phi}$
  and $w\in C^{0,1}[\tm,\tp]$ satisfies $w(\tm)=w(\tp)=0$ together
  with the solvability condition \eqref{PS4}, then the equation
  \[
  \Delta_\Phi v(t) = v(\Phi t) - v(t) = w(t)
  \]
  has a solution $v\in C^{0,1}[\tm,\tp]$, uniquely determined up to
  constant, which satisfies
  \[
  \|v\|_{Lip}\leq K_{\phi}\,\|w\|_{Lip},
  \com{so that}
  \|\Delta_\Phi^{-1}\|_{Lip} \le K_\phi.
  \]
\end{Thm*}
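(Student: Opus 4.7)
The plan is to solve $v(\Phi t)-v(t)=w(t)$ directly by iteration. Telescoping forward along the orbit of $\Phi$ gives $v(\Phi^N t) - v(t) = \sum_{k=0}^{N-1} w(\Phi^k t)$, and telescoping backward gives $v(t) - v(\Phi^{-N} t) = \sum_{k=1}^N w(\Phi^{-k} t)$. Assuming $\phi>0$ on $(\tm,\tp)$ (the other sign is symmetric), $\tp$ is an attracting fixed point of $\Phi$ and $\tm$ a repelling one, so forward iterates converge to $\tp$ and backward iterates to $\tm$. Passing to the limit using $w(\tpm)=0$ and continuity at the endpoints, I would obtain the two companion formulas
\[
v(t) = v(\tp) - \sum_{k\ge 0} w(\Phi^k t) = v(\tm) + \sum_{k\ge 1} w(\Phi^{-k} t).
\]
Consistency of these expressions is exactly the third equality in \eqref{PS4} (which forces $v(\tp)=v(\tm)$ since the endpoints are fixed and $w$ vanishes there), so $v$ is determined modulo the single additive constant $v(\tp)$, with uniqueness supplied by the kernel characterization established earlier.

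To control the series I use the Taylor expansion \eqref{taylor} at each endpoint. Near $\tp$,
\[
\Phi(t) - \tp = \lambda_+(t-\tp) + O\bigl((t-\tp)^2\bigr), \qquad \lambda_+ := 1 + \alpha\,\phi'(\tp) \in (0,1),
\]
once $\alpha_\phi < |\phi'(\tp)|^{-1}$; the analogous expansion at $\tm$ produces a multiplier $\mu_-=1+\alpha\,\phi'(\tm)>1$. On the complement of any fixed neighborhood of $\tpm$, $\phi$ is bounded below away from zero, so each orbit crosses the interior in a number of steps that depends only on $\phi$ and $\alpha$. Once inside $[\tp-\delta,\tp]$, iteration of the Taylor expansion yields $|\Phi^k t-\tp|\le C\lambda_+^k$; combined with $w(\tp)=0$ and Lipschitz continuity this gives $|w(\Phi^k t)|\le \|w\|_{Lip}\,|\Phi^k t-\tp|$, which is geometrically summable. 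The backward series is treated symmetrically at $\tm$ with the contracting multiplier $\mu_-^{-1}<1$.

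The crux is the Lipschitz bound, and the key observation is that \emph{different formulas must be used in different regions}: near $\tm$ the forward iterates stretch during the escape from the repelling endpoint, while the backward iterates contract, and vice versa near $\tp$. My plan is to fix a midpoint $t_m\in(\tm,\tp)$ and prove the mirror key lemmas
\[
\sum_{k\ge 0}\bigl|\Phi^{k} s - \Phi^{k} t\bigr| \le K_\phi^+\,|s-t|
\;\text{ on }[t_m,\tp],
\quad
\sum_{k\ge 1}\bigl|\Phi^{-k} s - \Phi^{-k} t\bigr| \le K_\phi^-\,|s-t|
\;\text{ on }[\tm,t_m],
\]
by splitting each sum at the first index $k_0$ beyond which both iterates lie in the appropriate attracting neighborhood, bounding the uniformly bounded number of transient steps crudely by $(1+\alpha\|\phi\|_{Lip})^{k_0}|s-t|$, and exploiting the Taylor-supplied geometric contraction for $k\ge k_0$. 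Pairs $(s,t)$ straddling $t_m$ are handled by triangle inequality through $v(t_m)$.

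I expect the main obstacle to be making the split uniform so that $K_\phi^\pm$ depend only on $\phi$ and not on the positions $s,t$, especially for pairs that enter the attracting neighborhood at very different iteration times, or with one iterate deeply inside while the other is still transiting. Here the Taylor hypothesis \eqref{taylor} is essential: it extracts the contracting multipliers $\lambda_+$ and $\mu_-^{-1}$ cleanly from iterated compositions, whereas bare Lipschitz continuity of $\phi$ would produce error terms that grow with the iteration count and destroy the uniformity of $K_\phi$. Once the lemmas are established, substitution into the appropriate formula for $v$ yields $\|v\|_{Lip}\le\max(K_\phi^+,K_\phi^-)\,\|w\|_{Lip}$, which is the conclusion of the theorem.
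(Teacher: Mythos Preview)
Your proposal is correct and follows the same core strategy as the paper: derive the two telescoping series formulas \eqref{R4}, use the Taylor hypothesis \eqref{taylor} to extract the contracting multipliers $1+\alpha\phi'(\tpm)$ near the endpoints, split each iterate sum into a uniformly bounded number of transient steps (controlled by $(1+|\alpha|\|\phi\|_{Lip})^{k_0}$) plus a geometric tail, and feed the resulting bound $\sum_k|\Phi^k s-\Phi^k t|\le C|s-t|$ into the series formula for $v$.

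The one organisational difference is the case split for the Lipschitz estimate. The paper (Theorem~\ref{Firstinverse}) fixes small $\epsilon_\phi$-neighborhoods of both endpoints and distinguishes (i) pairs with $t_a\ge\tm+\epsilon_\phi$ or $t_b\le\tp-\epsilon_\phi$, where Theorem~\ref{Phideep} applies directly, from (ii) pairs with $t_a$ near $\tm$ and $t_b$ near $\tp$, where it uses the \emph{backward} formula for $v(t_a)$ and the \emph{forward} formula for $v(t_b)$, bounds $|v(\tp)-v(\tm)|$ via \eqref{estcombat}, and exploits the lower bound $|t_b-t_a|\ge\tp-\tm-2\epsilon_\phi$. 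Your midpoint split, with the straddling case handled by the triangle inequality through $v(t_m)$, sidesteps this separate ``wide pair'' argument entirely and is a bit cleaner; the paper's version, on the other hand, makes all constants fully explicit in terms of $\epsilon_\phi$, $N_\alpha$, $\phi'_\infty$.

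One small remark: your parenthetical that consistency ``forces $v(\tp)=v(\tm)$'' is not how the paper argues---it treats the constant in \eqref{R6} as potentially nonzero and bounds it explicitly. Your observation is in fact correct if one insists on \eqref{PS4} at the endpoints themselves (where the orbit is constant and the sum vanishes), but you never use it in the Lipschitz estimate, so it is harmless either way.
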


The theorem shows that $\Delta_{\Phi}:C^{0,1}\to C^{0,1}$ has a
bounded inverse on its range, which consists of those Lipshitz
functions satisfying the solvability condition.  In Section
\ref{sinvert} we prove that this result extends to invertibility in
the space $C^{p}$, $p\in\mathbb N$, for $\alpha$ sufficiently small,
c.f.~Theorem \ref{Cpnorm} below.  Indeed, we show that there exist
constants $\alpha_0>0$ and $K$ depending only on $\phi$ and $p$, such
that
\begin{equation}
  \label{pest}
  \big\|\Delta_\Phi^{-1}w\big\|_p \le
  \frac{K}{|\alpha|}\,\big\|w\big\|_p,
  \com{for} |\alpha|\le \alpha_0.
\end{equation}

The solvability condition is essential because we obtain two
apparently independent expressions for $v$ in terms of $w$, one of
which converges, while the other need not.  Our formula requires
forward and/or backward iteration of the shift $\Phi$, and it is not
\emph{a priori} clear that both iterations converge.  The solvability
condition is then a compatibility between the two expressions, which
yields a unique answer, and provides convergence in all cases.

Writing $\Delta_{\Phi}=\mathcal{S}_{\Phi}-\mc I$, it is insightful to
compare $\Delta_{\Phi}$, a linear difference operator, to the
derivative operator $D$.  For the derivative operator, we know that
the range, say on $C^p$, lies in the much larger space $C^{p-1}$, so
$D$ can only be inverted on a subspace of $C^{p-1}$ which contains
less regular functions (by one order) than its domain.  In contrast,
the difference operator $\Delta_{\Phi}$ has a range equal to a {\it
  subset} of the space of all Lipschitz functions, which is in fact
the domain of $\Delta_{\Phi}$.  So the difference operator is much
more well behaved than the derivative operator.  Nonetheless, the
standard way of estimating the inverse of $\Delta_{\Phi}$ fails
precisely because $\Delta_{\Phi} v$ does not bound $D v$.  Assume for
example that $\mathcal{S}_{\Phi}$ and $D$ are defined on $C^{p}$.
Here is how a standard argument for estimating the inverse would go:

Let $\mathcal{D}: C^{p}\to C^{p}$ be a general operator we would like
to invert, and assume $v=0$ is the only solution to $\mathcal{D} v=0$;
We'd like to prove
\begin{equation}\label{PS7.0}
\|v\|_{C^{p}}\leq Const.\|\mathcal{D} v\|_{C^{p}}.
\end{equation}   
Assume not, so there exist $v_k$, which we can assume have unit
length, such that
\[
1 = \|v_k\|_{C^{p}}> k\|\mathcal{D} v_k\|_{C^{p}},
\com{so that}
\|\mathcal{D} v_k\|_{C^{p}}\to0.
\]
Now assume (and here is the point at which the argument fails for
$\Delta_{\Phi}$) that $\mathcal{D}$ dominates the derivative in the
sense that
\begin{equation}\label{PS8}
  \|\mathcal{D} v\|_{C^{p}}\geq\|Dv\|_{C^{p}}\equiv \|v\|_{C^{p+1}}.
\end{equation}
Then we would have $\|Dv_k\|_{C^{p}}$ uniformly bounded, and this
together with the bound $\|v_k\|_{C^{p}}=1$ would imply that $v_k$ is
compact in $C^{p}$, and hence a subsequence $v_k\to \bar{v}$ would
converge in $C^{p}$, and the limit would not be zero.  But (\ref{PS8})
implies that $\mathcal{D}\bar{v}=0$, contradicting the assumption that
the $\mathcal{D} v=0$ has only the zero solution.  Thus for this
argument, we need to establish (\ref{PS8}) in order to obtain a bound
on the inverse of $\mathcal{D}$, but for shift operators, the estimate
(\ref{PS8}) clearly fails!  The purpose of this note is to develop
methods sufficient to prove the desired estimate (\ref{PS7.0}) for the
inverse of $\Delta_{\Phi}$ in this case when (\ref{PS8}) fails.

The theorem as stated above gives an estimate for the bound on the
inverse of $\Delta_{\Phi}$ in the Lipschitz norm for shift operators
$\Phi$ that meet the admissibility conditions (\ref{taylor}) and
(\ref{phicond}).  The constant that bounds the inverse is $K_{\phi}$,
defined in (\ref{Kphi}) below, and this depends on seven parameters,
as expressed in Corollary \ref{cormain} below.  Although complicated,
the main point is that the constant $K_{\phi}$ depends continuously on
the norm of $\phi$ for any norm in which these seven parameters are
continuous, such as say $C^2$.  Similarly, the estimate \eqref{pest}
holds if $\phi\in C^{p+1}$, and the constants $\alpha_0$ and $K$
depend on $\|\phi\|_{C^{p+1}}$.

Part of the problem of obtaining bounds on the inverses of non-uniform
difference operators is that the {\it difference} between two shift
operators is an operator that {\it loses a derivative}
\cite{majd,youn}.  As a result, it is not possible to estimate a nearby
linearized operator by an estimate for the unperturbed operator plus
an estimate for the difference in the same norm.  However, our results
here show that estimates for the inverse of $\Delta_{\Phi}$ in $C^p$
are nonetheless stable under perturbation of $\phi$ in $C^{p+1}$
because they depend on constants which depend continuously on the
$C^{p+1}$ norm of $\phi$, (c.f. (\ref{depends}) of Corollary
\ref{cormain} and (\ref{dependsCk}) of Theorem \ref{Cpnorm} below).
Essentially, the above loss of derivative has been transferred to the
perturbation $\phi$, making it consistent with the use of a Nash-Moser
iteration in which the function $\phi$ is mollified at each step of
the iteration \cite{mose,tempyo4}.  The fact that the methods here do not
rely on bounds on the difference between nearby shifts, (which is
worse than the shifts themselves), makes the methods here that much
more interesting, and maybe even a bit surprising.  Our hope is that
the results established here for the scalar non-uniform difference
operator $\mathcal{S}_{\phi}-\mc I$ will shed light on the more
complicated multi-component shift operators whose inverse is required
to complete the authors' program for establishing periodic solutions
of the compressible Euler equations, \cite{tempyo4}.

The solvability condition \eqref{PS4} is non-local and difficult to
verify in general.  However, this problem came about as a result of
a Nash-Moser iteration, which requires solving the linear equation
\[
D\mc F_k(U_k)[V_{k+1}] = \mc F_k(U_k)
\]
for $V_{k+1}$, (the Newton iteration step), where $\mc F_k$ is the
nonlinear operator and $D\mc F_k(U_k)$ is its linearization around
$U_k$.  In our application, $\mc F_k$ includes nonlinear evolution of
a hyperbolic equation, which in turn yields a particular shift
operator.  It is somewhat surprising that the linearization
$DF_k(U_k)$ can be expressed in terms of the same shift operator, so
that the same shift appears on both sides of the equation, which
indicates that the solvability condition may naturally be satisfied in
the Nash-Moser iteration.

Finally, it is important to comment on the substantial difficulties
encountered by trying to invert $\Delta_{\Phi}=\mathcal{S}_{\Phi}-\mc
I$ by Fourier methods.  In fact, we know of no argument employing
Fourier analysis sufficient to obtain and estimate the inverse of
$\Delta_{\Phi}$.  The reason is that the shift operator does not have
a nice expression in terms of standard Fourier modes.  Now the Fourier
framework is most natural for expressing the periodicity conditions in
the base linearized problem which has constant coefficients, but under
perturbation, the Fourier method encounters shift operators like $\mc
S_{\Phi}$.  For example, suppose that $\phi(t)=\sin t$, and consider
$\mc S_\Phi$ acting on $L^2[0,\pi]$ via $\mc S_\Phi v(t)=v(t+\alpha
\sin t)$.  Expanding $v$ in Fourier sine series, we can write
\[
v(t)=\sum_{k=0}^{\infty}a_k\,\sin(kt).
\com{so that} 
\mc S_{\Phi}v(t) = \sum_{k=0}^{\infty}a_k\,e_k(t),
\]
where we have set
\[
e_k(t) = \sin(k(t+\alpha\sin t)).
\]
It is easy to see that $e_k(t)$ is an orthonormal basis for
$L^2[0,\pi]$ with weight function $\sigma=1/(1+\alpha\phi'(t))$, so
that $\mathcal{S}_{\Phi}$ defines a linear isometry on $L^2[0,\pi]$.
Our results here show that there are many fixed points of $\mc S_\Phi$
in $L^2$, but imposing continuity rules out all but the constants, and
we know of no way to establish this via Fourier methods.  The problem
of inverting $\Delta_{\Phi}$, begins with determining the kernel of
$\Delta_{\Phi}$, which reduces to the problem of expressing $e_{k}$ in
terms of the standard Fourier basis $\sin(kt)$.  This leads to the
formula
\[
  e_k(t)=\sum_{j=0}^{\infty}
  \int_{0}^{\pi}\sin(jt)\,\sin(k(t+\alpha\sin t))\;dt,
\]
which is problematic because of the essentially random distribution of
coefficients in the high modes, even for small $k$.  In particular,
our results imply that although the change of basis map has many fixed
points in $L^2$, there are no nontrivial continuous fixed points, so
none in $H^1$.  We know of no way to produce this result directly by
Fourier methods.

\section{Structure of $\Delta_\Phi$}

Recall that we defined the non-uniform difference operator by
\[
\Delta_\Phi = \mc S_\Phi - \mc I, \com{so that}
\Delta_\Phi v(t) = v(\Phi t)-v(t).
\]
For $\alpha$ small enough, the shift function $\Phi$ is invertible.
We write $\Phi^kt$ to denote $k$-fold composition of the shift
function $\Phi$, and we let $t_{-\infty}$ and $t_{+\infty}$ denote
consecutive zeroes of $\phi$, or equivalently fixed points of $\Phi$.
It follows that for any $t$ between $t_{\pm\infty}$, we have
$\Phi^kt\to t_{\pm\infty}$ as $k\to\pm\infty$, and indeed $\Phi$ maps
the interval $[t_{-\infty},t_{+\infty}]$ to itself.  Here we are
implicitly assuming $\alpha\phi>0$; if not we would have
$t_{+\infty}<t_{-\infty}$.

Since $\Phi$ is Lipshitz, it is clear that that $\Delta_\Phi$ can be
regarded as a map $C^{0,1}[t_{-\infty},t_{+\infty}]\to
C^{0,1}[t_{-\infty},t_{+\infty}]$.  Here $C^{0,1}$ is the space of
Lipshitz functions with norm
\[
\|v\|_{C^{0,1}}=\|v\|_{Lip}+\|v\|_{Sup},
\]
where $\|v\|_{Sup}$ denotes the supnorm, and
\begin{equation}\label{Lip}
  \|v\|_{Lip}=\sup_{t_{-\infty}\leq t_a,t_b\leq t_{+\infty}}
  \frac{|v(t_b)-v(t_a)|}{|t_b-t_a|}.
\end{equation}

If $v\in C^{0,1}[t_{-\infty},t_{+\infty}]$, the following
estimate follows immediately:
\[
\|\Delta_{\Phi}v(t)\|_{Lip}\leq \big(1+\|\Phi\|_{Lip}\big)\,\|v\|_{Lip}.
\]
That is, $\Delta_\Phi$ is a bounded linear map from $C^{0,1}$ to
itself, and our goal here is to examine the extent to which it is
invertible.  To do this we first identify the kernel and range of
$\Delta_\Phi$.

\subsection{The Kernel}

We first characterize the kernel of
$\Delta_{\Phi}=\mathcal{S}_{\Phi}-\mc I$ in the space
$L^{\infty}[t_{-\infty},t_{+\infty}]$, and then show that the only
continuous functions in the kernel are the constant functions.

First, choose any $t_0\in(t_{-\infty},t_{+\infty})$, and note that we
can write the interval as a disjoint union,
\[
\big(t_{-\infty},t_{+\infty}\big) =
\bigcup_k\big[\Phi^kt_0,\Phi^{k+1}t_0\big).
\]
It follows that if
\[
v \in \ker\Delta_\Phi, \com{so that}
v(\Phi t) = v(t),
\]
then for any $t$ and any $k$, we have 
\[
v(\Phi^kt) = v(t).
\]

For fixed $t_0$, any $\tilde{v}_0\in L^{\infty}[t_0,\Phi t_0)$ thus
determines an $L^\infty$ element of the kernel, via
\begin{equation}\label{PS9}
  v(t)=\tilde v_0(\Phi^kt), \com{$k$ such that} \Phi^kt\in[t_0,\Phi t_0),
\end{equation}
and every $L^{\infty}$ element of the kernel of $\Delta_{\Phi}$ must
come from some such $\tilde{v}_0$.  Thus (\ref{PS9}) puts
the kernel of $\Delta_{\Phi}$ in $1-1$ correspondence with
$L^{\infty}[t_0,\Phi t_0)$, and the resulting solutions are defined
independent of what base point $t_0$ is chosen.

Because $\Phi^kt\to t_{\pm\infty}$ as $k\to {\pm\infty}$, all the
values of $\tilde{v}_0$ occur in any neighborhood of $t_{\pm\infty}$
for large enough $k$, so that elements of the kernel will become
discontinuous at $t_{\pm\infty}$ unless $\tilde{v}_0(t)=V_0 = Const$
for all $t\in[t_0,\Phi t_0)$.  We state this as a lemma:
  
\begin{Lemma}
Let $t_0\in(t_{-\infty},t_{+\infty})$.  Then the kernel of
$\Delta_{\Phi}$ in $L^{\infty}(t_{-\infty},t_{+\infty})$ is
characterized as the set of all functions $\tilde{v}_0\in
L^{\infty}[t_0,\Phi t_0)$ extended to all of
  $L^{\infty}(t_{-\infty},t_{+\infty})$ by the shift condition
  (\ref{PS9}).  Moreover, the only continuous solutions that lie in
  the $L^{\infty}$ kernel of $\Delta_{\Phi}$ are constants, $v(t)=V_0$.
\label{lem:ker}
\end{Lemma}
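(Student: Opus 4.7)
The plan is to treat the two assertions separately, both hinging on the dynamics of $\Phi$ as a monotone map of $[\tm,\tp]$ to itself with $\tm$ and $\tp$ as the only fixed points. Since $\Phi^k t_0 \to \tpm$ as $k\to\pm\infty$ and $\Phi$ is strictly monotone, the disjoint union
\[
(\tm,\tp) = \bigsqcup_{k\in\mathbb Z} \big[\Phi^k t_0, \Phi^{k+1} t_0\big)
\]
holds, and for every $t\in(\tm,\tp)$ there is a unique integer $k=k(t)$ with $\Phi^k t\in[t_0,\Phi t_0)$. This map $t\mapsto k(t)$ is the bookkeeping device that drives the entire argument.

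To establish the characterization, I would first take any $\tilde v_0\in L^\infty[t_0,\Phi t_0)$ and define $v(t):=\tilde v_0(\Phi^{k(t)}t)$. Since $k(\Phi t)=k(t)-1$, we have $\Phi^{k(\Phi t)}(\Phi t)=\Phi^{k(t)}t$, so $v(\Phi t)=v(t)$ and $v\in\ker\Delta_\Phi$; the bound $\|v\|_{L^\infty}=\|\tilde v_0\|_{L^\infty}$ is immediate. Conversely, any $v\in L^\infty(\tm,\tp)$ satisfying $v\circ\Phi=v$ also satisfies $v\circ\Phi^k=v$ for all $k\in\mathbb Z$, so $v(t)=v(\Phi^{k(t)}t)=\tilde v_0(\Phi^{k(t)}t)$ where $\tilde v_0:=v|_{[t_0,\Phi t_0)}$. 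This shows the correspondence $v\leftrightarrow\tilde v_0$ is a bijection, and it is manifestly independent of $t_0$ because any two fundamental domains are related by a power of $\Phi$.

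For the second assertion, I would use continuity at a single fixed point, say $\tp$. Given $s\in[t_0,\Phi t_0)$, the orbit $\Phi^k s$ converges monotonically to $\tp$ as $k\to+\infty$, and $v(\Phi^k s)=v(s)=\tilde v_0(s)$ for all $k$. Continuity of $v$ at $\tp$ therefore forces
\[
\tilde v_0(s) = \lim_{k\to\infty} v(\Phi^k s) = v(\tp)
\]
for every $s\in[t_0,\Phi t_0)$. Hence $\tilde v_0$ is constant, say $\tilde v_0\equiv V_0$, and the shift condition \eqref{PS9} then gives $v\equiv V_0$ on $(\tm,\tp)$, with $v(\tpm)=V_0$ by continuity at the endpoints.

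The argument is essentially structural and I do not anticipate any serious obstacle: once the disjoint-union decomposition and the accumulation $\Phi^k s\to\tpm$ are in hand, both parts are short. The only mild subtlety is ensuring that the definition via (\ref{PS9}) is well-posed independent of the base point $t_0$, which follows because replacing $t_0$ by $\Phi^j t_0$ merely relabels $k(t)$ by $k(t)-j$ without changing $v$. Continuity at the endpoints is the actual work of the lemma, but it reduces to evaluating a constant sequence along an orbit that accumulates at $\tpm$.
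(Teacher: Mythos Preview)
Your proposal is correct and follows essentially the same approach as the paper: the paper establishes the same disjoint-union decomposition of $(\tm,\tp)$ into shifted copies of $[t_0,\Phi t_0)$, identifies kernel elements with $L^\infty[t_0,\Phi t_0)$ via \eqref{PS9}, and then argues that since $\Phi^k t\to\tpm$, all values of $\tilde v_0$ accumulate near $\tpm$, forcing discontinuity there unless $\tilde v_0$ is constant. Your write-up is in fact more detailed than the paper's (which gives the argument as a short discussion preceding the lemma rather than a formal proof), but the underlying ideas are identical.
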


Since the space of Lipshitz continuous functions $C^{0,1}$ contains
only continuous functions, we have the following corollary:

\begin{Corollary}
The only elements of the kernel of the difference operator
$\Delta_{\Phi}$ in the space $C^{0,1}[t_{-\infty},t_{+\infty}]$ are
the constant functions $v(t)=V_0$.
\end{Corollary}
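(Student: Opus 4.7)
My plan is to deduce this corollary directly from Lemma \ref{lem:ker} by exploiting the chain of inclusions
\[
C^{0,1}[\tm,\tp]\subset C^0[\tm,\tp]\subset L^\infty[\tm,\tp].
\]
The first step is simply to observe that any Lipshitz function on $[\tm,\tp]$ is in particular bounded and continuous, so that
\[
\ker\Delta_\Phi\big|_{C^{0,1}} \subseteq \ker\Delta_\Phi\big|_{L^\infty} \cap C^0[\tm,\tp].
\]
Thus every $v\in C^{0,1}[\tm,\tp]$ with $\Delta_\Phi v=0$ is automatically a continuous element of the $L^\infty$ kernel characterized in Lemma \ref{lem:ker}.

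The second step is to invoke the lemma's conclusion: the only continuous functions in the $L^\infty$ kernel are the constant functions $v(t)=V_0$. Conversely, any constant function is trivially Lipshitz and lies in $\ker\Delta_\Phi$ since $\mc S_\Phi V_0 - V_0 = 0$. Combining these two containments gives the claimed equality.

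There is no essential difficulty here; all the substantive work has been done in Lemma \ref{lem:ker}, whose proof uses the fact that, for a non-constant $\tilde v_0\in L^\infty[t_0,\Phi t_0)$, the orbit values $\tilde v_0(\Phi^k t)$ accumulate with distinct values in every neighborhood of $t_{\pm\infty}$ (since $\Phi^k t\to t_{\pm\infty}$ as $k\to\pm\infty$), which is incompatible with continuity at the endpoints. The corollary is then a two-line consequence, and the only thing to check is the trivial inclusion $C^{0,1}\subset C^0$, which requires no further estimate.
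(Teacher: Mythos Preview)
Your proof is correct and takes essentially the same approach as the paper: the paper simply remarks that $C^{0,1}$ contains only continuous functions, so the corollary is immediate from Lemma~\ref{lem:ker}. Your write-up is slightly more explicit (including the trivial converse that constants lie in the kernel), but the argument is identical in substance.
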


\subsection{The Range of $\Delta_{\Phi}$}

We next obtain a condition on $w$ to be in the range of
$\Delta_{\Phi}$ for continuous inputs $v\in
C[t_{-\infty},t_{+\infty}]$.  Thus write
\[
w = \Delta_{\Phi} v=\mathcal{S}_{\Phi}v-v.
\]
We solve directly for $v$ in terms of $w$.   To do so, write
\begin{align*}
v(\Phi t)&=v(t)+w(t)\\
v(\Phi^2 t)&=v(\Phi t)+w(\Phi t)\\
&\vdots\\
v(\Phi^k t)&=v(\Phi^{k-1}t)+w(\Phi^{k-1}t),
\end{align*}
and combine these to obtain
\[
v(t)=v(\Phi^k t)-\sum_{j=0}^{k-1}w(\Phi^jt),
\]
for $k>0$.  Replacing $t$ by $\Phi^{-k}t$, we also get
\[
v(t)=v(\Phi^{-k}t)+\sum_{l=1}^{k}w(\Phi^{-l}t).
\]
Now taking the limit $k\to\infty$ in these gives
\begin{equation}\label{R4}
\begin{aligned}
v(t)&=v(t_{+\infty})-\sum_{j=0}^{\infty}w(\Phi^jt), \com{and}\\
v(t)&=v(t_{-\infty})+\sum_{l=1}^{\infty}w(\Phi^{-l}t),
\end{aligned}
\end{equation}
since $v$ is continuous at $t_{\pm\infty}$.  Equating these yields a
condition on any function $w$ in the range of $\Delta_{\Phi}$ with
continuous input, namely
\begin{equation}\label{R6}
\sum_{k=-\infty}^{+\infty}w(\Phi^{k}t)=Const=v(t_{+\infty})-v(t_{-\infty}).
\end{equation}
Since $w(t)=v(\Phi t)-v(t)$, $w$ is continuous when $v$ is continuous,
so, by construction, we have proven the following lemma which
characterizes the range of $\Delta_{\Phi}:$

\begin{Lemma}\label{Consistency}
The range of the operator $\Delta_{\Phi} v=\mathcal{S}_{\Phi}-\mc I$
on $C[t_{-\infty},\tp]$ is the set of all $w\in
C[t_{-\infty},\tp]$ such that the infinite sums converge for
all $t$,
 \begin{equation}\label{R7aa}  
   \left|\sum_{j=0}^{\infty}w(\Phi^jt)\right|<\infty,\qquad
   \left|\sum_{k=1}^{\infty}w(\Phi^{-k}t)\right|<\infty,
\end{equation}
and such that \eqref{R6} holds, namely
\[
\sum_{k=-\infty}^{+\infty}w(\Phi^{k}t)=Const.
\]
In this case, either equation in (\ref{R4}) defines $v(t)$ such that
$\Delta_{\Phi}v=w$ to within a constant.
\end{Lemma}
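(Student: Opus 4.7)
My plan is to prove the two set inclusions separately. For the forward direction, that $w = \Delta_\Phi v$ with $v \in C[\tm,\tp]$ satisfies the stated conditions, the argument is essentially the telescoping computation already displayed: the identity $\sum_{j=0}^{k-1}w(\Phi^j t) = v(\Phi^k t) - v(t)$, together with $\Phi^k t \to \tp$ and continuity of $v$ at $\tp$, gives convergence of the forward sum to $v(\tp) - v(t)$; the mirror argument at $\tm$ gives the backward sum; adding them shows the bi-infinite sum equals the constant $v(\tp)-v(\tm)$.

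For the converse, given $w \in C[\tm,\tp]$ satisfying the stated convergence and constancy conditions, I would first observe that evaluating the forward sum at the fixed point $t=\tp$ yields $\sum_{j\ge 0} w(\tp)$, which is finite only when $w(\tp) = 0$; analogously $w(\tm)=0$. Fixing any constant $V_+$, I define
\[
v(t) := V_+ - \sum_{j=0}^\infty w(\Phi^j t).
\]
A one-step index shift in the series gives $\Delta_\Phi v(t) = w(t)$ pointwise. Condition \eqref{R6} ensures that the second formula in \eqref{R4} differs from this one only by a constant, so either representation produces a solution, unique modulo the constant kernel identified in Lemma \ref{lem:ker}.

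The main obstacle, and the only remaining task, is to show this $v$ lies in $C[\tm,\tp]$. For continuity at an interior point $t_* \in (\tm,\tp)$, I would split the series as a finite initial segment plus a tail. Using monotonicity of $\Phi$ and the forward convergence $\Phi^k s \to \tp$, for any $\epsilon > 0$ there is an $N$ such that $\Phi^N s$ lies in a neighborhood of $\tp$ on which $|w| < \epsilon$, uniformly for $s$ in some small neighborhood of $t_*$; this reduces the question to continuity of a finite sum, which is immediate. Continuity at $\tp$ then follows from the fact that every iterate $\Phi^j t$ stays near $\tp$ when $t$ is near $\tp$, combined with $w(\tp)=0$. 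Continuity at $\tm$ follows by symmetry from the backward representation. The quantitative refinements in Lipshitz and $C^p$ norms needed for the main theorem rely on the non-degeneracy hypothesis \eqref{taylor} to quantify the orbit contraction rate near the fixed points; the lemma as stated, being only qualitative, uses just the monotone convergence $\Phi^k t \to \tpm$ guaranteed by \eqref{phicond}.
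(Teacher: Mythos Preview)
Your forward direction is exactly the paper's argument: the telescoping identity together with continuity of $v$ at the fixed points yields \eqref{R4}, and subtracting the two expressions gives \eqref{R6}. The paper in fact states the lemma immediately after this computation, saying it is proved ``by construction,'' and does not supply a separate argument for the converse. So on that half you match the paper.

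Where you go beyond the paper is in attempting to prove that the $v$ defined by the series is actually continuous, and here there is a real gap. Your interior-point argument runs as follows: choose $N$ so that $\Phi^N s$ lies in a neighborhood of $\tp$ on which $|w|<\epsilon$, for all $s$ near $t_*$; then each term of the tail satisfies $|w(\Phi^j s)|<\epsilon$ for $j\ge N$, and you conclude that ``this reduces the question to continuity of a finite sum.'' But bounding each term of an infinite tail by $\epsilon$ does not bound the tail itself: the hypothesis is only \emph{pointwise} convergence of $\sum_{j\ge0}w(\Phi^j t)$, and pointwise convergence of a series of continuous functions does not force the sum to be continuous (think of $\sum_{n\ge0}(x^n-x^{n+1})$ on $[0,1]$). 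The same defect appears in your endpoint argument: ``every iterate $\Phi^j t$ stays near $\tp$ when $t$ is near $\tp$'' again only controls individual terms, not the series $\sum_{j\ge0}w(\Phi^j t)$, whose vanishing as $t\to\tp$ is exactly the continuity statement you are trying to prove.

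What is actually needed is a \emph{uniform} tail estimate, equivalently local uniform convergence of the series. In the paper this is never extracted from the bare hypotheses of the lemma; instead the subsequent results (Lemma~\ref{lem8} and Theorem~\ref{Firstinverse}) impose the stronger assumptions that $w$ is Lipschitz and $\phi$ is non-degenerate, and the resulting geometric contraction $|\tp-\Phi^k t|\le(1-|\alpha\phi'(\tp)|(1-\delta))^k|\tp-t|$ is what supplies the uniform tail bound \eqref{nicend3}. Your closing remark that the qualitative lemma ``uses just the monotone convergence $\Phi^k t\to\tpm$'' is therefore too optimistic: monotone convergence of the orbit alone does not give the uniform control your argument implicitly invokes.
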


\section{Invertibility of $\Delta_{\Phi}$}

Our goal now is to prove that the difference operator $\Delta_\Phi$ is
invertible on its range; that is, given a Lipshitz function $w$
satisfying \eqref{R7aa} and \eqref{R6}, we construct 
a function $v$ satisfying $\Delta_\Phi v=w$, and derive appropriate
bounds on the solution $v$.  We know from \eqref{R4} how the solution
$v$ must be defined, and the main issue is to show that \eqref{R6}
is a sufficient condition for solvability, and then to obtain bounds on $v$ in terms of $w$.  We begin by examining the iterations of the shift function
 in more detail.

\subsection{Properties of the shift}

Recall that the shift function is defined by \eqref{PeriodicShift3},
namely
\[
\Phi(t) = t + \alpha\,\phi(t),
\]
where we assume $\phi$ is non-degenerate, so that
(\ref{PeriodicShift3})-(\ref{taylor}) hold.  For definiteness, assume
that $\alpha\,\phi(t)>0$ on the interval $(\tm,\tp)$, which in turn
implies $\phi'(\tp)<0<\phi'(\tm)$; similar statements and estimates
hold if $\alpha\,\phi(t)<0$ and $\tp<\tm$.  We begin by bounding
$\phi(t)$ away from zero by a trapezoid, and use this to show that for
any $\alpha\ne0$, a finite number of iterations takes us from a
neighborhood of $\tm$ into a neighborhood of $\tp$.

\begin{figure}[thb]
\begin{tikzpicture}[domain=0:6.29]
  \draw[] (-0.4,0) -- (6.7,0);
  \draw[color=red,thick]   plot (\x,{0.7*sin(\x/2 r)}) node[above] {$\alpha\phi(t)$};
  \fill (0,0) circle (2 pt) node[below] {$\tm$};
  \fill (6.283,0) circle (2 pt) node[below] {$\tp$};
  \foreach \x in {1,1.3356,1.7691,2.3106,2.9511,3.6479,4.3256}
    \draw[color=blue,thick] (\x,0) -- (\x,{0.7*sin(\x/2 r)}) --
    ({\x+0.7*sin(\x/2 r)},0);
  \fill[blue] (1,0) circle (1 pt) node[below] {$t_0$};
  \fill[blue] (4.9065,0) circle (1 pt) node[below] {$\Phi^7t_0$};
  \draw[] (-0.4,-2) -- (6.7,-2);
  \draw[color=red,thick]   plot (\x,{0.7*sin(\x/2 r)-2}) node[above] {$\alpha\phi(t)$};
  \fill (0,-2) circle (2 pt) node[below] {$\tm$};
  \fill (6.283,-2) circle (2 pt) node[below] {$\tp$};
  \draw[color=green,thick] (0,-2) -- (1,-1.66440) -- (5.283,-1.66440) -- (6.283,-2);
\end{tikzpicture}
\caption{The Shift Function}
\label{phi}
\end{figure}

From this point on in the paper, assume without loss of generality
that $\phi(t)\geq0$, so that $\phi'(t_{-\infty})>0$,
$\phi'(t_{+\infty})<0$, and that $\alpha>0$, so $\tm<\tp$.  All proofs
carry over to the case $\alpha\phi'(t_{-\infty})>0$,
$\alpha\phi'(t_{+\infty})<0$ with slight modification.

\begin{Lemma}
  \label{lem5}
There are Lipshitz functions $E_+(t)$ and $E_-(t)$, vanishing at $\tp$
and $\tm$, respectively, such that, for both $\pm$ we have,
\begin{equation}\label{Ebound}
\phi(t)=\phi'(t_{\pm\infty})\left\{1+E_{\pm}(t)\right\}(t-t_{\pm\infty}).
\end{equation}
In particular, given any $\delta>0$, there are $\ep>0$
and $m_\phi>0$ such that
\begin{equation}
\begin{aligned}
  |E_-(t)|&<\delta \com{for} \tm \le t \le \tm + \ep,\com{and}\\
  |E_+(t)|&<\delta \com{for} \tp - \ep \le t \le \tp,
\end{aligned}
\label{Epm}
\end{equation}
and
\[
|\phi(t)| \ge m_\phi \com{for}
\tm + \ep\le t \le \tp - \ep.
\]
\end{Lemma}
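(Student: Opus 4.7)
My plan is to define $E_\pm$ explicitly by inverting the target relation (\ref{Ebound}) and then verify each property directly from the assumed non-degeneracy of $\phi$. Namely, set
\[
E_\pm(t) := \frac{\phi(t) - \phi'(t_{\pm\infty})(t - t_{\pm\infty})}{\phi'(t_{\pm\infty})(t - t_{\pm\infty})}
\qquad (t \ne t_{\pm\infty}),
\]
with $E_\pm(t_{\pm\infty}) := 0$. Then (\ref{Ebound}) holds by construction, and the denominator is nonzero off the endpoint because $\phi'(t_{\pm\infty}) \ne 0$ by non-degeneracy.

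Next I verify continuity, vanishing, and Lipschitz regularity. Since $\phi(t_{\pm\infty}) = 0$, the Taylor hypothesis (\ref{taylor}) applied at the zero $t_* = t_{\pm\infty}$ gives
\[
\phi(t) - \phi'(t_{\pm\infty})(t - t_{\pm\infty}) = O\big((t - t_{\pm\infty})^2\big),
\]
hence $E_\pm(t) = O(t - t_{\pm\infty})$ near the endpoint, which yields continuity and the claimed vanishing. On any closed subinterval of $[\tm, \tp]$ whose closure avoids $t_{\pm\infty}$, the denominator is bounded away from zero and the numerator is Lipschitz (a sum of a Lipschitz function and a linear one), so $E_\pm$ is Lipschitz there. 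Near $t_{\pm\infty}$ one must work harder: form difference quotients $[E_\pm(t)-E_\pm(s)]/(t-s)$ and combine the quadratic Taylor bound on the numerator with the Lipschitz regularity of $\phi$ to bound them uniformly up to the endpoint.

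The two estimates at the end of the lemma then fall out easily. First, (\ref{Epm}) is an immediate corollary of the continuity of $E_\pm$ together with the vanishing $E_\pm(t_{\pm\infty}) = 0$: given $\delta > 0$, choose $\ep$ small enough that $|E_\pm| < \delta$ on the corresponding $\ep$-neighborhood. Second, for the lower bound on $|\phi|$, once $\ep$ is fixed, $\phi$ is continuous on the compact interval $[\tm+\ep,\, \tp-\ep]$ and strictly positive there by (\ref{phicond}) together with the normalization $\phi \ge 0$, so it attains a positive minimum $m_\phi$ by the extreme value theorem.

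I expect the Lipschitz step near $t_{\pm\infty}$ to be the main obstacle, since only a pointwise Taylor expansion is assumed at the endpoint rather than any uniform second-order regularity; one must extract enough quantitative control from this pointwise expansion, combined with the global Lipschitz bound on $\phi$, to obtain uniform Lipschitz control of $E_\pm$ up to the endpoint.
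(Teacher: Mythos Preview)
Your proposal is correct and follows essentially the same route as the paper: define $E_\pm$ by inverting \eqref{Ebound}, invoke \eqref{taylor} to get $E_\pm(t)=O(t-t_{\pm\infty})$ and hence the Lipschitz extension to the endpoint, then read off \eqref{Epm} and obtain $m_\phi$ by compactness. The only cosmetic difference is that the paper names the Lipschitz constant $E_\phi$ of $E_\pm$ and then takes $\ep<\delta/E_\phi$ explicitly, whereas you appeal to continuity; your flagging of the endpoint Lipschitz step as the delicate point is apt, and the paper treats it with the same brevity you do.
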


\begin{proof}
Undoing \eqref{Ebound}, for $t\ne\tpm$, define the functions $E_\pm$
by
\[
\begin{aligned}
E_\pm(t) &:= \frac{\phi(t)}{\phi'(\tpm)\,(t-\tpm)}-1 \\
&= \frac{\phi(t)-\phi(\tpm)-\phi'(\tpm)\,(t-\tpm)}
        {\phi'(\tpm)\,(t-\tpm)}.
\end{aligned}
\]
Since $\phi$ is Lipshitz, $E_\pm$ are Lipshitz away from $\tpm$,
respectively, so we must show boundedness and continuity at $\tpm$.
It follows directly from  \eqref{taylor} and \eqref{phicond} that $E_\pm$ can
be extended to Lipshitz functions on all of $[\tm,\tp]$, and that
$E_\pm(\tpm)=0$.

Now if $\delta>0$ is given, let $E_\phi$ be a Lipshitz bound for
$E_\pm$,
\begin{equation}
  \label{epsilonphi}
  \|E_\pm\|_{Lip} \le E_\phi, \com{and take}
  \ep <\frac{\delta}{E_\phi},
\end{equation}
so that \eqref{Epm} holds.

Finally, we can choose $m_\phi>0$ because $\phi(t)$ is non-zero on the
compact interval $[\tm+\ep,\tp-\ep]$.  Define
\begin{equation}
  \label{phiinf}
\phi'_{\infty} = \min\left\{\big|\phi'(t_{-\infty})\big|,
                          \big|\phi'(t_{+\infty})\big|\right\},
\end{equation}
and restrict the size of $\ep$ if necessary, to obtain
\[
m_\phi = \phi'_\infty\,\ep\,(1-\delta).
\]
Then $\phi(t)$ is bounded below by the trapezoid,
\begin{equation}
  \label{lowerbnd}
\phi(t) \ge \begin{cases}
  \phi'_\infty\,(1-\delta)\,(t-\tm), & \tm\le t\le \tm+\ep,\\
  \phi'_\infty\,(1-\delta)\,\ep      & \tm+\ep\le t\le\tp-\ep,\\
  \phi'_\infty\,(1-\delta)\,(\tp-t), & \tp+\ep\le t\le \tp,
\end{cases}
\end{equation}
as pictured in Figure \ref{phi}.
\end{proof}

Recall that $\Phi$ given by \eqref{PeriodicShift3} is invertible for
all $\alpha$ satisfying
\begin{equation}\label{alphabound}
|\alpha| < \alpha_{\phi} := \frac{1}{\|\phi\|_{Lip}}.
\end{equation}
For convenience, assume now, without loss of generality, that
\[
\ep < |\tp-\tm|/4.
\]

\begin{Corollary}
  \label{cor6}
For any non-degenerate $\phi$ and $\alpha$ satisfying
\eqref{alphabound}, there is a positive integer $N_\alpha$ depending
only on $\alpha$ and $m_\phi$, such that for any $k\ge N_\alpha$ and
$t\in[\tm+\ep,\tp-\ep]$, we have both
\[
\Phi^{-k} t\in(t_{-\infty},t_{-\infty+\epsilon_{\phi}}) \com{and}
\Phi^{k} t\in(t_{+\infty}-\epsilon_{\phi},t_{+\infty}).
\]
\end{Corollary}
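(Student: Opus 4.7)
The plan is to exploit the uniform lower bound $\phi(t) \ge m_\phi$ on the middle interval $[\tm+\ep, \tp-\ep]$ provided by Lemma \ref{lem5}, which forces each iterate of $\Phi$ (or $\Phi^{-1}$) to advance by at least $\alpha\,m_\phi$ while the orbit is in that middle region. Since the middle interval has length less than $\tp - \tm$, no more than $\lceil (\tp - \tm)/(\alpha\,m_\phi) \rceil$ consecutive iterates can remain in it, giving an explicit bound on the number of steps the orbit needs to reach one of the endpoint neighborhoods.

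First I would fix $t \in [\tm+\ep, \tp-\ep]$ and examine the forward orbit $\{\Phi^k t\}_{k\ge 0}$.  Because $\alpha\phi > 0$ on $(\tm, \tp)$ and $\tpm$ are the only fixed points of $\Phi$ in $[\tm,\tp]$, the iterates form a strictly increasing sequence bounded above by $\tp$.  Letting $k_0$ be the least index with $\Phi^{k_0} t > \tp-\ep$, every earlier iterate satisfies
\[
\Phi^{k+1} t - \Phi^k t = \alpha\,\phi(\Phi^k t) \ge \alpha\,m_\phi,
\]
so $k_0 \le \lceil(\tp-\tm)/(\alpha\,m_\phi)\rceil$.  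Next I would observe that $(\tp-\ep, \tp)$ is forward-invariant under $\Phi$: by \eqref{alphabound} the map $\Phi$ is strictly monotone increasing on $[\tm,\tp]$, and it sends any $s \in (\tp-\ep, \tp)$ to $\Phi(s) \in (s, \Phi(\tp)) = (s, \tp) \subset (\tp-\ep, \tp)$.  Hence $\Phi^k t \in (\tp-\ep, \tp)$ for every $k \ge k_0$.

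The backward direction is entirely symmetric: $\Phi^{-1}$ is well defined and strictly monotone increasing by \eqref{alphabound}, has only $\tpm$ as fixed points in $[\tm,\tp]$, and satisfies $t - \Phi^{-1}(t) = \alpha\,\phi(\Phi^{-1}(t)) \ge \alpha\,m_\phi$ whenever $\Phi^{-1}(t) \in [\tm+\ep, \tp-\ep]$.  The identical counting yields an index at most $\lceil(\tp-\tm)/(\alpha\,m_\phi)\rceil$ at which the backward orbit enters the backward-invariant neighborhood $(\tm, \tm+\ep)$.  Setting
\[
N_\alpha := \left\lceil\frac{\tp-\tm}{\alpha\,m_\phi}\right\rceil + 1
\]
then works uniformly for both directions and proves the corollary.

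No serious obstacle arises; the only point requiring care is confirming the trapping property of each endpoint neighborhood under $\Phi^{\pm 1}$, which reduces to the strict monotonicity of $\Phi$ as a self-map of $[\tm,\tp]$ together with the fact that $\tpm$ are its only fixed points there.
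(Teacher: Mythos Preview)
Your proof is correct and follows essentially the same approach as the paper's: both use the uniform lower bound $\phi\ge m_\phi$ on the middle interval to force a minimum advance of $\alpha\,m_\phi$ per iterate, then count. The paper uses the slightly tighter numerator $\tp-\tm-2\ep$ in the bound for $N_\alpha$ and omits the explicit forward-invariance argument for the endpoint neighborhoods that you spell out, but the argument is otherwise the same.
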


\begin{proof}
For definiteness, suppose that $\alpha\phi(t)>0$.  As long as $t$ and
$\Phi^jt$ remain in the interval
$[\tm+\ep,\tp-\ep]$, we have
\[
\Phi t \ge t + \alpha\,m_\phi, \com{and}
\Phi^j t \ge t + j\,\alpha\,m_\phi,
\]
so it suffices to take
\begin{equation}
  \label{Nalpha}
N_\alpha \ge \frac{\tp -\ep -
  (\tm + \ep)}{\alpha\,m_\phi}+1.
\end{equation}
Note that if $\alpha\phi<0$, the intervals change: in that case
\[
\Phi^{-k} t\in(t_{-\infty}-\ep,t_{-\infty}) \com{and}
\Phi^{k} t\in(t_{+\infty},t_{+\infty}+\epsilon_{\phi}),
\]
for $t\in[\tp+\ep,\tm-\ep]$.
\end{proof}

We now show that because $\phi$ is non-degenerate, iteration of the
shift map $\Phi$ gives a geometric progression into the fixed points,
so we can use a geometric series to control the cumulative iterations.

\begin{Thm}\label{Phideep}
Let $\delta>0$ be given and let $\ep$, $m_\phi$ and $N_\alpha$ be
determined so that Lemma \ref{lem5} and Corollary \ref{cor6} hold, and
assume $t_a\le t_b$.  If $t_a\ge t_{-\infty}+\epsilon_{\phi}$, then
\begin{equation}\label{Phisumplus}
\sum_{k=0}^{\infty}\left|\Phi^kt_b-\Phi^kt_a\right|\leq
V_{\phi}|t_b-t_a|,
\end{equation} 
and if $t_b\le t_{+\infty}-\epsilon_{\phi}$, then
\begin{equation}\label{Phisumminus}
\sum_{k=0}^{-\infty}\left|\Phi^kt_b-\Phi^kt_a\right|\leq V_{\phi}|t_b-t_a|,
\end{equation} 
where
\begin{equation}\label{VPhisum}
V_\phi = \frac{2\,(1+|\alpha|\,\|\phi\|_{Lip})^{N_\alpha}}
          {|\alpha|\,\phi'_\infty\,(1-2\delta)}.
\end{equation}
\end{Thm}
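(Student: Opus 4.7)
My plan is to split the sum $\sum_{k=0}^\infty \bigl|\Phi^k t_b - \Phi^k t_a\bigr|$ at $k = N_\alpha$, bounding the head $k < N_\alpha$ by the global Lipshitz constant of $\Phi$ and the tail $k \ge N_\alpha$ by a geometric contraction near the fixed point $\tp$. For the head I use $|\Phi(s_b) - \Phi(s_a)| \le L\,|s_b - s_a|$ with $L := 1 + |\alpha|\,\|\phi\|_{Lip}$, which iterates to $|\Phi^k t_b - \Phi^k t_a| \le L^k\,|t_b - t_a|$; summing the geometric series gives a head contribution of at most $\frac{L^{N_\alpha}-1}{|\alpha|\,\|\phi\|_{Lip}}\,|t_b - t_a|$.

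The real work is in the tail. By Corollary~\ref{cor6} (together with $\Phi(t) > t$ under the sign conventions in force), the iterates $\Phi^{N_\alpha} t_a$ and $\Phi^{N_\alpha} t_b$ both lie in $(\tp - \ep,\tp)$, the right-endpoint neighborhood on which Lemma~\ref{lem5} gives $|E_+(t)| < \delta$. I claim that $\Phi$ contracts distances on this interval with rate $\mu := 1 - |\alpha|\,\phi'_\infty(1 - 2\delta) < 1$. To prove this I start from
\[
\Phi(s_b) - \Phi(s_a) = (s_b - s_a) + \alpha\bigl(\phi(s_b) - \phi(s_a)\bigr)
\]
and substitute $\phi(t) = \phi'(\tp)(1 + E_+(t))(t - \tp)$ from Lemma~\ref{lem5}, which rearranges to
\[
\Phi(s_b) - \Phi(s_a) = (s_b - s_a)\bigl(1 + \alpha\,\phi'(\tp)\bigr) + \alpha\,\phi'(\tp)\,R,
\]
with remainder $R = E_+(s_b)(s_b - s_a) + \bigl(E_+(s_b) - E_+(s_a)\bigr)(s_a - \tp)$. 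The pointwise bound $|E_+| < \delta$ and the Lipshitz bound $\|E_+\|_{Lip} \le E_\phi$, combined with the calibration $\ep < \delta/E_\phi$ from \eqref{epsilonphi}, give $|R| \le 2\delta\,|s_b - s_a|$; since $\phi'(\tp) < 0$ with $|\phi'(\tp)| \ge \phi'_\infty$ and $1 + \alpha\phi'(\tp) > 0$ under \eqref{alphabound}, the contraction estimate $|\Phi(s_b) - \Phi(s_a)| \le \mu\,|s_b - s_a|$ follows.

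Iterating the contraction for $k \ge N_\alpha$ then gives $|\Phi^k t_b - \Phi^k t_a| \le L^{N_\alpha}\mu^{k-N_\alpha}\,|t_b - t_a|$, so the tail sums to at most $\frac{L^{N_\alpha}}{1 - \mu}\,|t_b - t_a| = \frac{L^{N_\alpha}}{|\alpha|\,\phi'_\infty(1 - 2\delta)}\,|t_b - t_a|$. Adding the head and absorbing it using $\|\phi\|_{Lip} \ge \phi'_\infty \ge \phi'_\infty(1 - 2\delta)$ yields $\sum_{k=0}^\infty|\Phi^k t_b - \Phi^k t_a| \le V_\phi\,|t_b-t_a|$ with exactly the stated $V_\phi = \frac{2L^{N_\alpha}}{|\alpha|\,\phi'_\infty(1 - 2\delta)}$. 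The backward estimate \eqref{Phisumminus} is handled symmetrically: one works in the left-endpoint neighborhood $(\tm,\tm + \ep)$ using $E_-$ in place of $E_+$, and since $\phi'(\tm) > 0$ makes $\Phi$ locally expansive there, the geometric contraction comes from $\Phi^{-1}$ rather than $\Phi$, producing the same bound up to cosmetic replacement of $L$ by the Lipshitz constant of $\Phi^{-1}$.

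I expect the main obstacle to be the sharp contraction estimate near the fixed point. Because $\phi$ is only Lipshitz, one cannot apply the mean value theorem to $\phi$ directly to extract a factor of $\phi'(\tp)$ from $\phi(s_b) - \phi(s_a)$ with a controlled error; the Taylor-type hypothesis \eqref{taylor}, packaged in Lemma~\ref{lem5} as the Lipshitz remainder function $E_\pm$, is exactly what permits this linearization at the zeros of $\phi$ from merely Lipshitz data, and the calibration $\ep < \delta/E_\phi$ is what keeps $R$ proportional to $|s_b - s_a|$ rather than to the a priori larger $|s_a - \tp|$.
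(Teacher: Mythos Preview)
Your argument is correct and follows essentially the same route as the paper's: both prove the contraction estimate near $\tp$ by expanding $\phi$ via the Lipschitz remainder $E_+$ and bounding the cross term by $2\delta|s_b-s_a|$ using the calibration $\ep<\delta/E_\phi$, then split the forward sum at $N_\alpha$ into a head controlled by the global Lipschitz constant $L=1+|\alpha|\,\|\phi\|_{Lip}$ and a geometrically summable tail, combining to exactly the stated $V_\phi$. The only cosmetic difference is that the paper first isolates the subcase $t_a\ge\tp-\ep$ (your tail estimate) as a standalone ``case~(i)'' before doing the split in ``case~(ii)'', whereas you treat both at once by invoking $\Phi(t)>t$ to push points already near $\tp$ further in; and, like the paper, you dispatch the backward sum \eqref{Phisumminus} by the symmetric argument at $\tm$.
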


\begin{proof}
  For definiteness, assume that $t_a \ge \tm + \ep$. We prove
  \eqref{Phisumplus}; a similar argument yields \eqref{Phisumminus}
  when $t_b\le \tp-\ep$.  We break the proof into two cases:
\begin{enumerate}[(i)]
  \item $t_a\ge \tp-\ep$; and 
  \item $t_a \in (t_{-\infty}+\epsilon_{\phi},t_{+\infty}-\epsilon_{\phi})$.
\end{enumerate}

Consider first case (i).  Then also $t_b\ge\tp-\ep$, and we can use
(\ref{Ebound}) to write
\begin{align*}
\phi(t_b)&=\phi'(t_{+\infty})\left\{1+E_{+}(t_b)\right\}(t_b-t_{+\infty}),\\
\phi(t_a)&=\phi'(t_{+\infty})\left\{1+E_{+}(t_a)\right\}(t_a-t_{+\infty}),
\end{align*}
and recall we have assumed $\phi>0$ so that $\phi'(t_{+\infty})<0$.
Subtracting and rearranging, we obtain
\begin{align*}
\phi(t_b&)-\phi(t_a)\\&=\phi'(t_{+\infty})\left\{t_b-t_a+E_{+}(t_b)(t_b-t_{+\infty})-E_{+}(t_a)(t_a-t_{+\infty})\right\}\\
&=\phi'(t_{+\infty})\left\{t_b-t_a+E_{+}(t_b)(t_b-t_a)+(E_{+}(t_b)-E_{+}(t_a))(t_a-t_{+\infty})\right\}\\
&=\phi'(t_{+\infty})\left\{1+E_{+}(t_b)+\frac{E_{+}(t_b)-E_{+}(t_a)}{t_b-t_a}(t_a-t_{+\infty})\right\}(t_b-t_a).
\end{align*}
Now, according to \eqref{Epm}, we have $|E_+(t_b)|\leq\delta$, and by
\eqref{epsilonphi}, we have
\[
\left|\frac{E_{+}(t_b)-E_{+}(t_a)}{t_b-t_a}(t_b-t_{+\infty})\right|\leq E_{\phi}\epsilon_{\phi}\leq\delta.
\]
Thus, since $\phi'(t_{+\infty})<0$, we can estimate
\[
\phi'(t_{+\infty})\left\{1+2\delta)\right\}(t_b-t_a)\leq\phi(t_b)-\phi(t_a)\leq\phi'(t_{+\infty})\left\{1-2\delta)\right\}(t_b-t_a),
\]
which in turn gives
\begin{equation}
  \label{PhiBd}
\big|\Phi t_b-\Phi t_a\big|\leq \left\{1-|\alpha\,\phi'(t_{+\infty})|(1-2\delta)\right\}(t_b-t_a).
\end{equation}
By induction conclude that
\[
\left|\Phi^k t_b-\Phi^k t_a\right|\leq \left\{1-|\alpha\,\phi'(t_{+\infty})|(1-2\delta)\right\}^k(t_b-t_a),
\]
and thus
\begin{align*}
\sum_{k=0}^{\infty}\left|\Phi^k t_b-\Phi^k t_a\right|
&\leq \sum_{k=0}^{\infty}\left\{1-|\alpha\,\phi'(t_{+\infty})|(1-2\delta)\right\}^k(t_b-t_a)\\
&\le V_1\,(t_b-t_a),
\end{align*}
where we have set
\[
V_1 := \frac{1}{|\alpha\,\phi'_{\infty}|\,(1-2\delta)};
\]
this completes the proof in case (i).

Consider now case (ii).  Since $t_{-\infty}+\epsilon_{\phi}\leq
t_a\leq t_b$, by Corollary \ref{cor6} we know that
$t_{+\infty}-\epsilon_{\phi}\leq\Phi^kt_a<\Phi^kt_b$ for all $k\geq
N_\alpha$, and we write
\begin{equation}\label{breaksum1}
  \sum_{k=0}^{\infty}\left|\Phi^kt_b-\Phi^kt_a\right|
  =\sum_{k=0}^{N_\alpha-1}\left|\Phi^kt_b-\Phi^kt_a\right|
  +\sum_{k=N_\alpha}^{\infty}\left|\Phi^kt_b-\Phi^kt_a\right|.
\end{equation} 
Since all the terms in the second sum lie within $\epsilon_{\phi}$ of
$t_{+\infty}$, we can estimate the second sum by case (i) and obtain
\[
  \sum_{k=N_\alpha}^{\infty}\left|\Phi^kt_b-\Phi^kt_a\right|
  \leq V_1\left|\Phi^{N_\alpha}t_b-\Phi^{N_\alpha}t_a\right|.
\]
Now $|\Phi t_2-\Phi t_1|\le (1 +
|\alpha|\,\|\phi\|_{Lip})\,|t_2-t_1|$, so by induction,
\begin{equation}\label{sumPhik}
|\Phi^k t_2-\Phi^k t_1|\le (1 +
|\alpha|\,\|\phi\|_{Lip})^k\,|t_2-t_1|.
\end{equation}

Thus, the second sum in (\ref{breaksum1}) is bounded by
\[
  \sum_{k=N_\alpha}^{\infty}\left|\Phi^kt_b-\Phi^kt_a\right|
  \leq V_1(1+|\alpha|\,\|\phi\|_{Lip})^{N_\alpha}\left|t_b-t_a\right|.
\]

Consider next the first sum in (\ref{breaksum1}). By (\ref{sumPhik})
we can estimate
\[
\begin{aligned}
  \sum_{k=0}^{N_\alpha-1}\left|\Phi^{k}t_b-\Phi^{k}t_a\right|
  &\leq\sum_{k=0}^{N_\alpha-1}(1+|\alpha|\,\|\phi\|_{Lip})^k(t_b-t_a)\\
  &\leq\frac{(1+|\alpha|\,\|\phi\|_{Lip})^{N_\alpha}-1}
      {|\alpha|\,\|\phi\|_{Lip}}\,|t_b-t_a|,
\end{aligned}
\]
where we have summed the finite geometric series.
 
Combining the estimates for the sums in (\ref{breaksum1}), we get
\[
\sum_{k=0}^{\infty}\left|\Phi^kt_b-\Phi^kt_a\right|
\leq V_\phi\,\left|t_b-t_a\right|,
\]
where we have written
\begin{align*}
V_1\,(1&+|\alpha|\,\|\phi\|_{Lip})^{N_\alpha}+
\frac{(1+|\alpha|\,\|\phi\|_{Lip})^{N_\alpha}-1}
     {|\alpha|\,\|\phi\|_{Lip}}\\
&\le \frac{2\,(1+|\alpha|\,\|\phi\|_{Lip})^{N_\alpha}}
          {|\alpha|\,\phi'_\infty\,(1-2\delta)} =: V_\phi,
\end{align*}
since $\phi'_\infty\le \|\phi\|_{Lip}$.  This establishes case (ii),
and completes the proof of the theorem.
\end{proof}

\subsection{Convergence of the infinite sums}

Our goal is to prove invertibility of $\Delta_\Phi$, which amounts to
proving consistency of and estimates for our construction \eqref{R4},
which expresses the solution $v$ of the equation
\[
\Delta_\Phi v = \mc S_\Phi v - v = w
\]
as an infinite series.  We begin by using the geometric convergence of
iterations of $\Phi$ to simplify the condition for convergence of
these series.

\begin{Lemma}\label{lem8}
  Assume that the conditions of Theorem \ref{Phideep} holds, and let
  $w$ be Lipschitz continuous on $[t_{-\infty},t_{+\infty}]$.  Then
  the series in (\ref{R7aa}) converge if and only if $w$ vanishes at
  the fixed points,
\[
 w(\tm)=0 \com{and}  w(\tp) = 0.
\]
Moreover, if $t$ lies within $\epsilon_{\phi}$ of one of the endpoints
$t_{\pm\infty}$, we have the bound
\begin{equation}\label{nicend3}
  \left|\sum_{k=0}^{\infty}w(\Phi^{\pm k}t)\right|
  \leq\|w\|_{Lip}\,\frac{|t_{\pm \infty}-t|}
           {|\alpha|\,(1-\delta)\,\phi'_{\infty}},
\end{equation}
where $\phi'_\infty$ is given by \eqref{phiinf}.
\end{Lemma}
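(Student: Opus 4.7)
My plan is to split the statement into (a) the equivalence that $w(\tpm)=0$ iff the series converge, and (b) the quantitative bound, and address them in turn. The proof rests on the observation that the fixed points $\tpm$ are preserved by every power of $\Phi$, so $w(\Phi^k t)$ compares cleanly with $w(\tpm)$ via Lipshitz continuity, and Theorem \ref{Phideep} (in a slightly sharpened form) then controls the resulting geometric sums.

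For necessity, convergence of $\sum_{k\ge 0}w(\Phi^k t)$ forces $w(\Phi^k t)\to 0$. Since $\Phi^k t\to\tp$ by the dynamics and $w$ is continuous, this yields $w(\tp)=0$; the analogous argument applied to the backward iterates gives $w(\tm)=0$. For sufficiency together with the quantitative bound, assume $w(\tp)=0$ and $t\in(\tp-\ep,\tp)$. Because $\Phi^k\tp=\tp$, Lipshitz continuity of $w$ gives
\[
\bigl|w(\Phi^k t)\bigr|
   = \bigl|w(\Phi^k t)-w(\Phi^k\tp)\bigr|
   \le \|w\|_{Lip}\,\bigl|\Phi^k t - \Phi^k\tp\bigr|,
\]
so the task reduces to summing $|\Phi^k t-\Phi^k\tp|$. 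I would redo case (i) of Theorem \ref{Phideep} with $t_b=\tp$ in place of a generic point near $\tp$: substituting $t_b=\tp$ into \eqref{Ebound} makes $E_+(t_b)=0$ identically, so the bracketed expression from that proof collapses to $1+E_+(t)$, and a single application of $|E_+|<\delta$ yields the one-step contraction
\[
|\Phi t-\tp|\le \bigl(1-|\alpha\,\phi'(\tp)|\,(1-\delta)\bigr)\,|t-\tp|,
\]
with the factor $(1-\delta)$ rather than the $(1-2\delta)$ appearing in $V_\phi$. Iterating and summing the geometric series produces $|t-\tp|/(|\alpha|\,|\phi'(\tp)|(1-\delta))$, and $|\phi'(\tp)|\ge\phi'_\infty$ then delivers \eqref{nicend3}; the case near $\tm$ is entirely symmetric, using $E_-$ and backward iterates.

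For a general $t\in(\tm,\tp)$ and sufficiency without the sharp constant, I would combine a finite initial segment with the tail estimate above: Corollary \ref{cor6} ensures $\Phi^k t$ enters $(\tp-\ep,\tp)$ after at most $N_\alpha$ iterations, after which absolute convergence of the tail follows from the near-endpoint bound, while the first $N_\alpha$ terms are controlled by $\|w\|_{Sup}$. The main technical point, and the only one that really needs care, is the improvement from $(1-2\delta)$ to $(1-\delta)$ in the one-step contraction; once this refinement of Theorem \ref{Phideep} at the fixed point is verified, everything else is routine bookkeeping.
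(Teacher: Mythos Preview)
Your proposal is correct and follows essentially the same route as the paper: necessity via $w(\Phi^k t)\to 0$ plus continuity, and sufficiency via the one-step contraction $|\Phi t-\tp|\le(1-|\alpha\phi'(\tp)|(1-\delta))|t-\tp|$ iterated to a geometric series. The only cosmetic difference is that you obtain this contraction by specializing the bracketed expression in case~(i) of Theorem~\ref{Phideep} to $t_b=\tp$, whereas the paper derives it directly from \eqref{Ebound} applied to $t_k$; the computations are identical.
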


\begin{proof}
Assuming convergence of the series (\ref{R7aa}), 
\[
\left|\sum_{k=0}^{\infty}w(\Phi^kt)\right|<\infty \com{and}
\left|\sum_{k=0}^{\infty}w(\Phi^{-k}t)\right|<\infty,
\]
it follows that $w(\Phi^{\pm k}t)\to 0$,
and so by continuity,
\[
w(\tpm) = w\big(\lim_{k\to\infty}\Phi^{\pm k}t\big) =
\lim_{k\to\infty}w(\Phi^{\pm k}t) = 0.
\]

For the reverse implication, assume that $w(\tp)=0$.  We show the
first sum in (\ref{R7aa}) is finite; the other case is similar.  Since
only finitely many terms in each sum of (\ref{R7aa}) lie
$\epsilon_{\phi}$ away from $t_{\pm\infty}$, it suffices to prove
\[
\left|\sum_{k=0}^{\infty}w(\Phi^kt)\right|<\infty \com{for}
t\in(t_{+\infty}-\epsilon_{\phi},t_{+\infty}).
\]
We write $t_0:=t>\tp-\ep$ and set $t_k = \Phi^kt$.  Using
\eqref{PeriodicShift3} and \eqref{Ebound}, we write
\[
t_{k+1} = t_k + \alpha\,\phi'(\tp)\,\{1+E_+(t_k)\}\,(t_k-\tp),
\]
which in turn yields
\[
\begin{aligned}
  \tp-t_{k+1} &= \big(1 + \alpha\,\phi'(\tp)\,\{1+E_+(t_k)\}\big)
  \,(\tp-t_k) \\
  &\le \big( 1 - |\alpha\,\phi'(\tp)|\,(1-\delta)\big)\,(\tp-t_k),
\end{aligned}
\]
where we have used \eqref{Epm}, and recalled that
$\alpha\,\phi'(\tp)<0$.  Continuing by induction we obtain
\[
t_{\infty}-t_k\leq
\big( 1 - |\alpha\,\phi'(\tp)|\,(1-\delta)\big)^k\,(\tp-t),
\]
and since $w(\tp)=0$, we can write
\begin{align*}
  \left|\sum_{k=0}^{+\infty}w(\Phi^kt)\right|
  &\leq \sum_{k=0}^{\infty}\left|w(\tp)-w(\Phi^kt)\right|
  \leq\sum_{k=0}^{+\infty}\|w\|_{Lip}\,|\tp-t_k|\\
  &\leq\|w\|_{Lip}\,\sum_{k=0}^{+\infty}
\big( 1 - |\alpha\,\phi'(\tp)|\,(1-\delta)\big)^k\,(t_{+\infty}-t)\\
&\leq \|w\|_{Lip}\,\frac{t_{+\infty}-t}
    {|\alpha\,\phi'(\tp)|\,(1-\delta)}<\infty.
\end{align*}
The argument in the symmetrical case
$t\in(t_{-\infty},t_{-\infty}+\epsilon_{\phi})$ gives the result 
\[
\left|\sum_{k=0}^{-\infty}w(\Phi^{-k}t)\right|\leq
\|w\|_{Lip}\frac{t-\tm}{|\alpha\,\phi'(\tm)|\,(1-\delta)},
\]
and these together yield \eqref{nicend3}.
\end{proof}

As a corollary, we obtain bounds on the full sum
$\sum_{k=-\infty}^{\infty}w(\Phi^kt)$ provided $w$ vanishes at $\tpm$.

\begin{Corollary}  Assume $w$ is Lipschitz continuous and satisfies
  $w(\tpm)=0$, and the conditions of Theorem \ref{Phideep} hold.  Then
  for any $t\in[\tm,\tp]$,
\begin{equation}\label{estcombat}
\left|\sum_{k=-\infty}^{\infty}w(\Phi^kt)\right|
\leq \|w\|_{Lip}\,
\left\{\frac{2}{|\alpha|\,(1-\delta)\,\phi'_{\infty}}+N_\alpha\right\}
\,|t_{+\infty}-t_{-\infty}|.
\end{equation}
\end{Corollary}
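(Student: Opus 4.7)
The plan is to decompose the bi-infinite sum into a controlled finite middle block and two geometrically decaying tails, and apply Lemma \ref{lem8} to each tail after shifting the starting point. Specifically, I will write
\[
\sum_{k=-\infty}^{\infty} w(\Phi^k t)
 = \sum_{k=N_\alpha}^{\infty} w(\Phi^k t)
 + \sum_{k=-\infty}^{-N_\alpha} w(\Phi^k t)
 + \sum_{|k|<N_\alpha} w(\Phi^k t),
\]
so that by Corollary \ref{cor6} every iterate in the forward (resp.\ backward) tail lies within $\ep$ of $\tp$ (resp.\ $\tm$).

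For the forward tail, I would re-index and apply \eqref{nicend3} of Lemma \ref{lem8} with starting point $\Phi^{N_\alpha}t$, which sits in $(\tp-\ep,\tp)$, to obtain
\[
\left|\sum_{k=N_\alpha}^{\infty} w(\Phi^k t)\right|
 = \left|\sum_{j=0}^{\infty} w\bigl(\Phi^j(\Phi^{N_\alpha}t)\bigr)\right|
 \le \|w\|_{Lip}\,\frac{\tp-\Phi^{N_\alpha}t}{|\alpha|\,(1-\delta)\,\phi'_\infty}
 \le \|w\|_{Lip}\,\frac{\ep}{|\alpha|\,(1-\delta)\,\phi'_\infty}.
\]
An entirely symmetric argument applied to $\Phi^{-N_\alpha}t\in(\tm,\tm+\ep)$ yields the same bound for the backward tail, so the two tails contribute at most $\frac{2\ep}{|\alpha|(1-\delta)\phi'_\infty}\|w\|_{Lip}$, which, since $\ep<|\tp-\tm|/4$, is bounded by $\frac{2}{|\alpha|(1-\delta)\phi'_\infty}\,|\tp-\tm|\,\|w\|_{Lip}$.

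For the middle block, I would exploit that $w$ vanishes at \emph{both} endpoints: by Lipschitz continuity, for every $s\in[\tm,\tp]$,
\[
|w(s)| \le \|w\|_{Lip}\,\min\bigl(|s-\tm|,\,|s-\tp|\bigr)
       \le \tfrac{1}{2}\,\|w\|_{Lip}\,|\tp-\tm|.
\]
Since the middle block contains $2N_\alpha-1$ terms, its total contribution is at most $(2N_\alpha-1)\cdot\tfrac{1}{2}\|w\|_{Lip}|\tp-\tm|<N_\alpha\|w\|_{Lip}|\tp-\tm|$. Summing the three bounds gives the estimate \eqref{estcombat}.

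There is no real obstacle here — the decomposition and the uniform bound on middle terms are routine once Lemma \ref{lem8} and Corollary \ref{cor6} are in hand; the only subtlety worth emphasizing is the use of \emph{both} endpoint vanishing conditions $w(\tpm)=0$ to pick up the factor of $\tfrac12$ on each middle term, which is what brings the middle coefficient down from $2N_\alpha-1$ to $N_\alpha$.
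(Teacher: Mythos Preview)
Your decomposition and bounds are sound once you are in position to apply Corollary~\ref{cor6}, but there is a genuine gap in the hypothesis check: Corollary~\ref{cor6} only guarantees $\Phi^{N_\alpha}t\in(\tp-\ep,\tp)$ and $\Phi^{-N_\alpha}t\in(\tm,\tm+\ep)$ when the base point satisfies $t\in[\tm+\ep,\tp-\ep]$. The corollary you are proving is stated for \emph{all} $t\in[\tm,\tp]$. If, say, $t$ lies in $(\tm,\tm+\ep)$, the forward step $\alpha\phi(t)$ is of order $t-\tm$ and can be arbitrarily small, so there is no uniform $N_\alpha$ after which $\Phi^{N_\alpha}t$ is guaranteed to sit within $\ep$ of $\tp$; your forward-tail estimate then fails as written.

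The paper repairs exactly this point by first observing that the bi-infinite sum $\sum_{k\in\mathbb Z}w(\Phi^kt)$ is unchanged under replacing $t$ by any iterate $\Phi^jt$, and then choosing $t_0=\Phi^jt$ to be the last iterate with $t_0\le\tm+\ep$. This forces $\Phi t_0>\tm+\ep$, so Corollary~\ref{cor6} applies to $\Phi t_0$ and gives $\Phi^{N_\alpha+1}t_0\ge\tp-\ep$. With this shift the middle block has exactly $N_\alpha$ terms, each bounded crudely by $\|w\|_{Sup}\le\|w\|_{Lip}\,|\tp-\tm|$, and the two tails are handled by \eqref{nicend3} just as you do. In particular the paper does not need your $\tfrac12$ trick from the double vanishing $w(\tpm)=0$; that device is correct and pleasant, but it only becomes relevant because your unshifted middle block carries $2N_\alpha-1$ terms. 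If you simply insert the one-line observation that the sum is orbit-invariant and relocate $t$ into $[\tm+\ep,\tp-\ep]$ (the endpoint cases $t=\tpm$ being trivial since every summand vanishes), your argument goes through unchanged.
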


\begin{proof}
Since the $\Phi^kt\to\tpm$ as $k\to\pm\infty$, we can replace $t$ by
$\Phi^jt$ for any convenient integer $j$.  Using Corollary \ref{cor6},
we choose $t_0 = \Phi^jt$ such that
\[
t_0 \le \tm+\ep \com{and} \Phi^{N_\alpha+1}t_0 \ge \tp-\ep.
\]
Then we have
\[
\left|\sum_{k=-\infty}^{\infty}w(\Phi^kt)\right|\leq
\left|\sum_{k=0}^{-\infty}w(\Phi^{-k}t_0)\right|+
\left|\sum_{k=1}^{N_\alpha}w(\Phi^kt_0)\right|+
\left|\sum_{k=N_\alpha+1}^{\infty}w(\Phi^kt_0)\right|,
\]
and the first and third sums are estimated by \eqref{nicend3}.  We
estimate the middle term by
\[
\left|\sum_{k=1}^{N_\alpha}w(\Phi^kt_0)\right| \le 
N_\alpha\,\|w\|_{Sup}\leq
N_\alpha\,\|w\|_{Lip}\,|t_{+\infty}-t_{-\infty}|,
\]
since $w$ vanishes at $\tpm$.  Combining the three terms yields
\eqref{estcombat}.
\end{proof}

\subsection{Invertibility in the space $C^{0,1}$}

We now state and prove our main theorem, which states that the
operator $\Delta_\Phi = \mc S_\Phi - \mc I$ is invertible on
its range, and that the inverse is bounded in the $C^{0,1}$ norm.

\begin{Thm}\label{Firstinverse}  
  Assume that $\phi(t)$ is non-degenerate, let $\delta>0$ and
  \[
  |\alpha| < \frac{1-\delta}{\phi'_{\infty}}
  < \frac1{\|\phi\|_{Lip}}
  \]
  be given, and choose $\ep < (\tp-\tm)/4$ and $N_\alpha$ so that the
  conditions of Theorem \ref{Phideep} hold.  Also let
  $w\in C^{0,1}[\tm,\tp]$ satisfy $w(\tm)=w(\tp)=0$ and the
  consistency condition \eqref{R6}, namely
  \[
  \sum_{k=-\infty}^{+\infty} w(\Phi^kt) = Const.
  \]
  Then the equation
  \[
  \Delta_\Phi v(t) = v(\Phi t) - v(t) = w(t)
  \]
  has a solution $v\in C^{0,1}[\tm,\tp]$, uniquely determined up to
  constant, which is explicitly given by either equation in
  \eqref{R4}, which are consistent.  Moreover, the solution satisfies
  the bound
  \begin{equation}
  \|v\|_{Lip}\leq K_{\phi}\,\|w\|_{Lip}, \com{so that}
  \|\Delta_\Phi^{-1}\|_{} \le K_\phi,
  \label{InvBound}
  \end{equation}
  where $K_{\phi}$ is given by
  \begin{equation}\label{Kphi}
  K_{\phi}=\max\left\{K_1,V_{\phi}\right\},
  \end{equation}
  where
\begin{equation}\label{K1}
K_1 = \frac{2 + 2\,\ep + |\alpha|\,N_\alpha\,(1-\delta)\,\phi'_{\infty}}
     {|\alpha|\,(1-\delta)\,\phi'_{\infty}\,(\tp-\tm-2\ep)},
\end{equation}
and $V_{\phi}$ is given in (\ref{VPhisum}).
\end{Thm}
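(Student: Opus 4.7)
My plan is to construct $v$ explicitly via the formulas \eqref{R4}, verify the two expressions are consistent using \eqref{R6}, and then derive the Lipschitz bound through a case analysis on the positions of $t_a \le t_b$ in $[\tm,\tp]$. I would begin by fixing the additive constant: set $v(\tm)=0$ and $v(\tp) = \sum_{k}w(\Phi^k t_*)$ for some choice of $t_*$, a value well-defined by the consistency hypothesis \eqref{R6}. For $t\in(\tm,\tp)$, I define $v(t)$ by either equation in \eqref{R4}; the two formulas yield the same value precisely because of \eqref{R6}, and the infinite series converge by Lemma \ref{lem8} since $w(\tpm)=0$. A telescoping computation then gives $\Delta_\Phi v = w$, and continuity of $v$ at the endpoints follows from the bound \eqref{nicend3}. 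Uniqueness up to the additive constant follows from the corollary of Lemma \ref{lem:ker}, which identifies the Lipschitz kernel of $\Delta_\Phi$ with the constants.

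For the Lipschitz estimate, fix $t_a \le t_b$ in $[\tm,\tp]$. If $t_a \ge \tm+\ep$, the forward formula in \eqref{R4} gives
\[
|v(t_b)-v(t_a)| \le \|w\|_{Lip}\sum_{j\ge 0}\bigl|\Phi^j t_b - \Phi^j t_a\bigr| \le V_\phi\,\|w\|_{Lip}\,|t_b-t_a|
\]
by Theorem \ref{Phideep}. Symmetrically, if $t_b \le \tp-\ep$, the backward formula together with Theorem \ref{Phideep} yields the same bound. In both regimes the Lipschitz constant is $V_\phi$.

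The main obstacle is the remaining case, when $t_a < \tm+\ep$ \emph{and} $t_b > \tp-\ep$: here neither series in \eqref{R4} contracts geometrically across the full range of summation. However, in this case $|t_b-t_a| \ge \tp-\tm-2\ep$ is bounded below, so it suffices to estimate the absolute difference
\[
|v(t_b)-v(t_a)| \le |v(t_b)-v(\tp)| + |v(\tp)-v(\tm)| + |v(\tm)-v(t_a)|.
\]
The outer two terms are each at most $\ep\,\|w\|_{Lip}/\bigl(|\alpha|(1-\delta)\phi'_\infty\bigr)$ by Lemma \ref{lem8}. For the middle term I use the consistency identity $v(\tp)-v(\tm)=\sum_{k}w(\Phi^k t_0)$, choosing $t_0 \in [\tm+\ep,\tp-\ep]$ so that Corollary \ref{cor6} governs the iterates. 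I split the bilateral sum into a backward tail near $\tm$, a forward tail near $\tp$ (each bounded via Lemma \ref{lem8}), and a finite block of $O(N_\alpha)$ interior terms, each controlled by $\|w\|_{Sup} \le \|w\|_{Lip}\,(\tp-\tm)/2$, using that $w$ vanishes at $\tpm$. Dividing the resulting absolute bound on $|v(t_b)-v(t_a)|$ by $\tp-\tm-2\ep$ produces the constant $K_1$ in \eqref{K1}, and combining with the $V_\phi$ bound from the first two cases gives $\|\Delta_\Phi^{-1}\|_{Lip} \le K_\phi$ as in \eqref{Kphi}. The delicate point is that the intermediate block is genuinely non-contractive, so it is essential here that $N_\alpha$ is finite (from Corollary \ref{cor6}) and that $w$ is small at the fixed points, so that the combined contribution remains uniformly bounded in $\|w\|_{Lip}$.
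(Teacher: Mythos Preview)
Your proposal is correct and follows essentially the same approach as the paper: the identical two-case split on $(t_a,t_b)$, with Theorem~\ref{Phideep} handling case~(i) to give the $V_\phi$ bound, and in case~(ii) the triangle inequality through $v(\tpm)$ combined with Lemma~\ref{lem8} for the outer terms and the estimate \eqref{estcombat} (which is exactly your tail--block--tail decomposition of the bilateral sum) for the middle term. The only cosmetic difference is that you invoke the slightly sharper bound $\|w\|_{Sup}\le \|w\|_{Lip}(\tp-\tm)/2$ on the interior block, which would yield a constant no larger than the stated $K_1$.
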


\begin{proof}
According to Lemma \ref{lem8}, the conditions $w(\tpm)=0$ imply that
the infinite series converge, so that equations \eqref{R4} make
sense.  We can choose one of $v(\tpm)$ arbitrarily, and the other is
determined by \eqref{R6}, which also implies consistency of both
equations in \eqref{R4}.  Since the solution is given by an explicit
formula, it is unique up to our choice of constant, and all that
remains is to establish \eqref{InvBound}.

As above, for definiteness we assume that $\alpha\,\phi(t)>0$; similar
estimates hold for the other case.  Assume that we are given $t_a$ and
$t_b\in[\tm,\tp]$, and suppose $t_a<t_b$.  We again consider two
cases:
\begin{enumerate}[(i)]
  \item $t_a\ge \tm+\ep$ or $t_b\le \tp-\ep$; and 
  \item $t_a < \tm + \ep$ and $t_b > \tp-\ep$.
\end{enumerate}

For the first case, we assume $t_a\ge \tm+\ep$; the case $t_b\le
\tp-\ep$ follows similarly.  Here Theorem \ref{Phideep} applies
directly: we use the first equation in \eqref{R4} to describe $v(t)$
and write
\begin{align}
  \big|v(t_b)-v(t_a)\big|&=
  \left|\sum_{k=0}^{\infty}\left(w(\Phi^{k}t_b)-w(\Phi^{k}t_a)\right)\right|
  \nonumber\\ &\leq
  \|w\|_{Lip}\sum_{k=0}^{\infty}\left|\Phi^{k}t_b-\Phi^{k}t_a\right|
  \nonumber\\[2pt]
  &\leq\|w\|_{Lip}\,V_{\phi}\,\big|t_b-t_a\big|,
  \label{case1}
\end{align}
where we have applied (\ref{Phisumplus}).  When $t_b\leq
t_{+\infty}-\epsilon_{\phi}$, the same estimate holds using the second
equation of (\ref{R4}) and (\ref{Phisumminus}).  This is the required
estimate for case (i).

We now consider case (ii), in which $t_a$ and $t_b$ lie within
$\epsilon_{\phi}$ of different fixed points.  We use
different expressions for $v(t_a)$ and $v(t_b)$; by \eqref{R4}, we
have
\begin{align*}
  v(t_a) &= v(t_{-\infty})+\sum_{k=1}^{\infty}w(\Phi^{-k}t_a),
  \com{and}\\
  v(t_b) &= v(t_{+\infty})-\sum_{k=0}^{\infty}w(\Phi^kt_b).
\end{align*}
Subtracting, taking the absolute value, and applying (\ref{nicend3})
gives
\begin{equation}
\big|v(t_b)-v(t_b)\big|\leq\big|v(t_{+\infty})-v(t_{-\infty})\big| +
\|w\|_{Lip}\,\frac{2\,\epsilon_{\phi}}{|\alpha|\,(1-\delta)\,\phi'_{\infty}}.
\label{vdiff}
\end{equation}
We use \eqref{estcombat} to write
\[
\big|v(\tp)-v(\tm)\big| \le \|w\|_{Lip}\,
\left\{\frac{2}{|\alpha|\,(1-\delta)\,\phi'_{\infty}}+N_\alpha\right\}
\,|t_{+\infty}-t_{-\infty}|,
\]
so that \eqref{vdiff} becomes
\[
\big|v(t_b)-v(t_b)\big| \le \|w\|_{Lip}\,K_0,
\]
where we have set
\begin{align*}
K_0 &:= \left\{\frac{2}{|\alpha|\,(1-\delta)\,\phi'_{\infty}}+N_\alpha\right\}
\,|t_{+\infty}-t_{-\infty}| +
\frac{2\,\ep}{|\alpha|\,(1-\delta)\,\phi'_{\infty}}\\
&= \frac{2 + 2\,\ep + |\alpha|\,N_\alpha\,(1-\delta)\,\phi'_{\infty}}
     {|\alpha|\,(1-\delta)\,\phi'_{\infty}}.
\end{align*}
Finally, since
\[
t_b - t_a > \tp - \tm - 2\ep,
\]
we can write
\begin{equation}
  \label{case2}
\big|v(t_b)-v(t_a)\big| \le \|w\|_{Lip}\,K_1\,\big|t_b-t_a\big|,
\end{equation}
where we have set
\[
K_1 := \frac{K_0}{\tp-\tm-2\ep} =
\frac{2 + 2\,\ep + |\alpha|\,N_\alpha\,(1-\delta)\,\phi'_{\infty}}
     {|\alpha|\,(1-\delta)\,\phi'_{\infty}\,(\tp-\tm-2\ep)},
\]
which is \eqref{K1}.  Combining \eqref{case1} and \eqref{case2}
yields \eqref{InvBound}, and the proof is complete.
\end{proof}

\begin{Corollary}\label{cormain}
Since the dependence of $K_{\phi}$ in (\ref{K1}) is given by
\begin{equation}
  \label{depends}
K_{\phi}\equiv K_{\phi}(\alpha,\delta,\epsilon_{\phi},N_\alpha,
                       t_{+\infty}-t_{-\infty},\|\phi\|_{Lip},\phi'_{\infty}),
\end{equation}
it follows that $K_{\phi}$ depends continuously on $\phi$ in any norm
in which these parameters are continuous.  In particular, if $\phi$ is
the perturbation of a fixed $\phi_0$, we can write
\[
K_\phi = \frac1{|\alpha|}\,K(\delta,\phi) \com{where} K(\delta,\phi) = O(1).
\]
\end{Corollary}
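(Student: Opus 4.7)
The plan is to establish both assertions by direct inspection of the explicit formulas \eqref{Kphi}, \eqref{K1}, and \eqref{VPhisum}, checking that each of the seven parameters appearing in \eqref{depends} depends continuously on $\phi$ in a suitable norm, and then factoring out the overall $1/|\alpha|$ scaling.

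First I would verify that $K_1$ and $V_\phi$ are continuous functions of the seven listed parameters on the relevant domain. This is immediate: $K_1$ is a rational expression in $\alpha$, $\delta$, $\ep$, $N_\alpha$, $\phi'_\infty$, and $\tp-\tm$, with denominator $|\alpha|(1-\delta)\phi'_\infty(\tp-\tm-2\ep)$ that is strictly positive under our standing assumptions $\alpha\ne 0$, $\delta<1$, $\phi'_\infty>0$, $\ep<(\tp-\tm)/4$; and $V_\phi$ is continuous in $\alpha$, $\delta$, $N_\alpha$, $\|\phi\|_{Lip}$, $\phi'_\infty$ for the same reason. Consequently $K_\phi=\max\{K_1,V_\phi\}$ is continuous in the seven parameters.

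Next I would trace how each parameter depends on $\phi$ itself. The quantity $\|\phi\|_{Lip}$ is continuous in any norm dominating $C^{0,1}$. Because $\phi'(\tpm)\ne0$ by non-degeneracy, the implicit function theorem applied to $\phi(t)=0$ shows that the fixed points $\tpm$, and hence $\tp-\tm$ and $\phi'_\infty=\min\{|\phi'(\tm)|,|\phi'(\tp)|\}$ (via \eqref{phiinf}), are continuous functions of $\phi$ in any norm controlling $C^1$. The Lipschitz bound $E_\phi$ on $E_\pm$ appearing in \eqref{epsilonphi} is controlled by the $C^2$ norm of $\phi$ in view of the Taylor condition \eqref{taylor}, so $\ep$ (defined by $\ep<\delta/E_\phi$) can be chosen continuously in $\phi$ in the $C^2$ norm, and likewise in $C^{p+1}$ for $p\ge 1$. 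Finally $N_\alpha$ is determined by \eqref{Nalpha} from $\alpha$, $\ep$, and $m_\phi=\phi'_\infty\,\ep\,(1-\delta)$; except on the measure-zero set where the right-hand side of \eqref{Nalpha} is an integer, $N_\alpha$ depends continuously (in fact, is locally constant) in these inputs, and in any case the bound obtained by taking the ceiling is an upper semi-continuous function, which is all that is needed to retain an upper bound on $K_\phi$. Composing these continuity statements gives \eqref{depends}.

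For the scaling assertion $K_\phi=|\alpha|^{-1}K(\delta,\phi)$ with $K(\delta,\phi)=O(1)$, I would argue as follows. The expression \eqref{K1} for $K_1$ has the form $|\alpha|^{-1}$ times a factor in which the product $|\alpha|N_\alpha$ appears. From \eqref{Nalpha}, $|\alpha|N_\alpha$ is bounded as $\alpha\to 0$, so $|\alpha|K_1$ remains bounded. For $V_\phi$ the only delicate term is $(1+|\alpha|\,\|\phi\|_{Lip})^{N_\alpha}$; with $N_\alpha\le C/|\alpha|+1$, this is bounded by $\exp(C\|\phi\|_{Lip})\cdot(1+|\alpha|\|\phi\|_{Lip})$ as $|\alpha|\to 0$, so $|\alpha|V_\phi$ is likewise bounded. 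Taking the maximum, $|\alpha|K_\phi$ is bounded by a quantity $K(\delta,\phi)$ depending continuously on $\phi$ (through the parameters above) but not on $\alpha$, which is the stated $O(1)$ conclusion.

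The only mildly subtle point is the discreteness of $N_\alpha$: I would handle it by replacing the defining inequality \eqref{Nalpha} by the real-valued upper bound $(\tp-\tm-2\ep)/(|\alpha|\,m_\phi)+1$, using this continuous surrogate in the estimates for $K_1$ and $V_\phi$, and noting that the resulting (slightly larger) constant still bounds $K_\phi$.
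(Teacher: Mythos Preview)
Your proposal is correct and follows essentially the same approach as the paper: the paper's own argument is a terse three-line remark that uses \eqref{Nalpha} to get $N_\alpha=O(1/|\alpha|)$, deduces $K_1=O(1/|\alpha|)$ from \eqref{K1}, and bounds $(1+|\alpha|\,\|\phi\|_{Lip})^{N_\alpha}\le e^{|\alpha|\,\|\phi\|_{Lip}\,N_\alpha}=O(1)$ to conclude $V_\phi=O(1/|\alpha|)$. Your treatment is more thorough---in particular your discussion of how each of the seven parameters depends on $\phi$ and your handling of the discreteness of $N_\alpha$ go beyond what the paper spells out---but the key estimates are identical.
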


Indeed, \eqref{Nalpha} yields $N_\alpha= O(1/|\alpha|)$, so that
\eqref{K1} yields $K_1=O(1/|\alpha|)$ and using
\[
(1 + |\alpha|\,\|\phi\|_{Lip})^{N_\alpha}
\le e^{|\alpha|\,\|\phi\|_{Lip}\,N_\alpha} = O(1),
\]
we see that \eqref{VPhisum} also gives $V_\phi=O(1/|\alpha|)$.

\section{Inversion of $\Delta_{\phi}$ in other norms.}\label{sinvert}

We now address the question: in which other norms is $\Delta_\Phi$
invertible?  We consider only functions which are at least Lipshitz
continuous, so that the results of the previous section apply.
Our goal then is to identify those norms in which $\Delta_\Phi$ is
invertible on its range,
\[
\|v\| \le K\,\|w\| \com{if} \Delta_\Phi v = w.
\]

To be specific, let $\|\cdot\|$ denote the norm and let $X$ be the
Banach space of Lipshitz functions on $[\tm,\tp]$ bounded in this
norm; that is,
\[
X = \Big\{ v\in C^{0,1}[\tm,\tp],\  \|v\|<\infty \Big\}.
\]
Our starting assumption is that the norm $\|\cdot\|$ respects
composition, in the sense that
\begin{equation}
  \label{comp}
\|v\circ \Psi\| \le K_0\,\|v\|\,\|\Psi\| \com{for some $K_0$,}
\end{equation}
provided $\Psi:[\tm,\tp]\to[\tm,\tp]$ and $\Psi\in X$.  In particular
the shift operator $\mc S_\Phi$ is a bounded operator on $X$, whose
norm is controlled by $\|\Phi\|$, and so $\Delta_\Phi:X\to X$ is a
bounded operator,
\[
\|\Delta_\Phi\| \le K_0\,\|\Phi\|+1.
\]

The following theorem gives a sufficient condition for the inverse of
$\Delta_\Phi$ to be bounded in a given norm.

\begin{Thm} Assume the Banach space $X$ has a norm satisfying \eqref{comp}.
  The range of the operator $\Delta_\Phi:X\to X$ is the set $\mc R$
  that consists of those functions $w\in X$ that satisfy \eqref{R6},
  which implicitly implies $w(\tpm)=0$.  Moreover, if we can sum either
  of the series
  \begin{equation}
  \sum_{k=0}^\infty\big\|\Phi^{ k}\big\| < \infty, \com{or}
  \sum_{k=1}^\infty\big\|\Phi^{-k}\big\| < \infty,
  \label{normcond}
  \end{equation}
  where as usual, $\Phi^k$ denotes $k$-fold composition, then the
  solution operator $\Delta_\Phi^{-1}$ is bounded on its domain.  That
  is, there is a constant $K$ such that, if $w\in \mc R$ and $v$
  satisfies
  \[
  \Delta_\Phi v = w, \com{then}
  \|v\| \le K\,\|w\|.
  \]
  \label{genthm}
\end{Thm}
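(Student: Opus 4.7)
My plan is to follow the blueprint of Theorem \ref{Firstinverse}, but to replace the explicit Lipshitz estimates of Theorem \ref{Phideep} with the abstract composition hypothesis \eqref{comp} and the summability assumption \eqref{normcond}. First I would characterize the range: the inclusion $\text{range}(\Delta_\Phi)\subseteq\mc R$ is immediate, since if $v\in X\subset C^{0,1}[\tm,\tp]$ satisfies $\Delta_\Phi v=w$, then evaluation at the fixed points yields $w(\tpm)=v(\Phi\tpm)-v(\tpm)=0$, while the telescoping identity that produced \eqref{R6} already shows $w$ satisfies the consistency condition. For the reverse inclusion, Lemma \ref{lem8} guarantees pointwise convergence of both series in \eqref{R4} for any $w\in\mc R$, and \eqref{R6} forces the two resulting formulas for $v$ to agree up to a constant. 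That the resulting $v$ indeed solves $\Delta_\Phi v=w$ is a short telescoping computation,
\[
v(\Phi t)-v(t) = -\sum_{k=0}^{\infty}w(\Phi^{k+1}t)+\sum_{k=0}^{\infty}w(\Phi^{k}t)=w(t).
\]

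Second, I would promote this pointwise construction to a bound in the norm of $X$. Applying the composition hypothesis \eqref{comp} a single time with $\Psi=\Phi^{\pm k}$, viewed as one composed self-map of $[\tm,\tp]$ rather than $k$ iterated applications of $\Phi$, gives
\[
\|w\circ\Phi^{\pm k}\|\le K_0\,\|w\|\,\|\Phi^{\pm k}\|.
\]
Assuming without loss of generality that the first alternative in \eqref{normcond} holds, the series $\sum_{k\ge0}w\circ\Phi^k$ then converges absolutely in the Banach space $X$, and the first formula in \eqref{R4}, normalized so that $v(\tp)=0$, determines $v\in X$ with
\[
\|v\|\le K_0\,\|w\|\,\sum_{k=0}^{\infty}\|\Phi^k\| =: K\,\|w\|.
\]
The second alternative of \eqref{normcond} is handled symmetrically using the second formula in \eqref{R4}.

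The step I expect to require the most care is justifying the single use of \eqref{comp} with $\Phi^k$ on the right. A naive iteration of \eqref{comp} would give only $\|w\circ\Phi^k\|\le K_0^k\,\|w\|\,\|\Phi\|^k$, which is not summable; the crucial observation is that $\Phi^k$ is itself a Lipshitz self-map of $[\tm,\tp]$, so \eqref{comp} may be applied to it directly, yielding a linear dependence on $\|\Phi^k\|$. The summability condition \eqref{normcond} then precisely encodes the geometric contraction of $\Phi^k$ toward one of the fixed points; in the Lipshitz setting this summability is exactly the content of \eqref{Phisumplus}--\eqref{Phisumminus} in Theorem \ref{Phideep}, and the resulting general estimate recovers Theorem \ref{Firstinverse} with $K=K_0\,V_\phi$.
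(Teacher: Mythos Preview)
Your proposal is correct and follows essentially the same route as the paper: use the formula \eqref{R4} with the appropriate endpoint value set to zero, apply the composition bound \eqref{comp} once with $\Psi=\Phi^{\pm k}$ to obtain $\|w\circ\Phi^{\pm k}\|\le K_0\,\|w\|\,\|\Phi^{\pm k}\|$, and sum using \eqref{normcond} to get $K=K_0\sum\|\Phi^{\pm k}\|$. Your explicit remark that \eqref{comp} must be invoked a single time on the composite map $\Phi^k$, rather than iterated $k$ times, is exactly the point the paper's computation relies on (though the paper does not call it out separately), and your telescoping verification that the series formula actually solves $\Delta_\Phi v=w$ is a detail the paper leaves implicit.
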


We regard \eqref{normcond} as a condition on $\Phi$ (or
$\alpha\,\phi$) which implies that the equation can be solved in $X$:
that is, if the norm satisfies \eqref{comp}, then condition
\eqref{normcond} is a condition only on the shift function which
implies that the equation $\Delta_\Phi$ is invertible on its range in
the space $X$.

\begin{proof}
Since the norm satisfies \eqref{comp}, the forward operator is bounded
and maps $X\to X$.  Since $X$ is a subset of the Lipshitz continuous
functions, the range $\mc R$ consists of those elements of $X$ which
satisfy our consistency conditions $w(\tpm)=0$ and \eqref{R6}.

We must show that if $w \in\mc R$, and if $v$ solves $\Delta_\Phi v =
w$, then $\|v\| \le K\,\|w\|$ for some $K$.  Since the solution $v$ is
given by a formula, namely \eqref{R4}, this bound becomes an explicit
inequality.  To prove the theorem, use the formula of \eqref{R4}
corresponding to the finite sum in \eqref{normcond}, and set the
corresponding $v(\tp)$ or $v(\tm)$ to zero.  For definiteness, suppose
that the first inequality in \eqref{normcond} holds, say
\[
\sum_{k=0}^\infty\big\|\Phi^k\big\| = M;
\]
then set $v(\tp)=0$ and use the first equation in \eqref{R4} to define
$v$.  Using \eqref{comp}, it follows that
\begin{align*}
  \big\|v\big\| &= \left\|\sum_{k=0}^\infty w\circ\Phi^k\right\| \\
  &\le \sum_{k=0}^\infty K_0\,\big\|w\big\|\,\left\|\Phi^k\right\| \\
  &\le K\,\big\|w\big\|,
\end{align*}
with $K = K_0\,M$, as required; the other case follows similarly.
\end{proof}
  
In light of \eqref{normcond}, the following corollary is immediate.

\begin{Corollary}
  If $K_0=1$ in \eqref{comp}, then a sufficient condition for the
  operator $\Delta_\Phi$ to be bounded is that for
  some $\eta>0$ and $k$ large enough, we have
  \[
  \|\Phi^{k+1}\|\le (1-\eta)\,\|\Phi^k\|, \com{or}
  \|\Phi^{-k-1}\|\le (1-\eta)\,\|\Phi^{-k}\|.
  \]
\end{Corollary}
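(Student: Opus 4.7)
The plan is to reduce the corollary to a direct application of Theorem \ref{genthm}, by showing that the ratio-type hypothesis on the iterates forces one of the series in \eqref{normcond} to converge. There is essentially no subtlety beyond a geometric-series bound; the assumption $K_0 = 1$ in \eqref{comp} is convenient because it makes the $X$-norm behave multiplicatively under composition, but the real work is carried by the hypothesis on successive iterates of $\Phi$.

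First, suppose the forward alternative holds: there exist $\eta > 0$ and $k_0 \in \mathbb N$ such that
\[
\|\Phi^{k+1}\| \le (1-\eta)\,\|\Phi^k\| \com{for all} k \ge k_0.
\]
By induction on $j \ge 0$, I would conclude $\|\Phi^{k_0+j}\| \le (1-\eta)^j\,\|\Phi^{k_0}\|$, so that
\[
\sum_{k=k_0}^\infty \big\|\Phi^k\big\| \le \|\Phi^{k_0}\|\,\sum_{j=0}^\infty (1-\eta)^j = \frac{\|\Phi^{k_0}\|}{\eta} < \infty.
\]
Adding the finite sum $\sum_{k=0}^{k_0-1}\|\Phi^k\|$, which is bounded because each iterate $\Phi^k$ lies in $X$ (using \eqref{comp} with $K_0=1$ inductively, one has $\|\Phi^k\|\le\|\Phi\|^k$ for $0\le k<k_0$), I obtain the first summability condition in \eqref{normcond}. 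The backward alternative is handled identically, invoking the second series in \eqref{normcond}.

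Once \eqref{normcond} is established, the conclusion is immediate from Theorem \ref{genthm}: for any $w$ in the range $\mc R$, the explicit formula \eqref{R4} (with $v(\tp)=0$ in the forward case, $v(\tm)=0$ in the backward case) yields a solution $v \in X$ satisfying $\|v\| \le K\|w\|$, with $K = K_0 \sum_k \|\Phi^{\pm k}\| < \infty$. Since $K_0 = 1$ here, the constant reduces to the sum of the series itself. There is no real obstacle; the corollary is essentially the observation that a ratio test of Cauchy-type, applied to the sequence $\|\Phi^k\|$, is sufficient to certify the hypothesis of Theorem \ref{genthm}.
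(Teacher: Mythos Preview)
Your proposal is correct and matches the paper's approach: the paper simply declares the corollary ``immediate'' from \eqref{normcond}, and you have written out exactly the geometric-series argument that makes this immediacy explicit, followed by an appeal to Theorem~\ref{genthm}. There is nothing to add.
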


Note that these bounds apply for the Lipshitz norm,
cf.~Theorem~\ref{Phideep} and \eqref{PhiBd}.

\subsection{Bounds in the $C^p$ Norm}

Because the kernel of $\Delta_\Phi$ is infinite dimensional in spaces
of discontinuous functions, it is most natural to establish
invertibility of in spaces in which the function and its
derivatives are continuous.  Here we prove invertibility of
$\Delta_\Phi$ in $C^p$, the simplest such spaces.

Recall that for integers $p\ge 0$, we can write the $C^p$ norm as
\[
\|f\|_p = \sup_t\big|f(t)\big|_p,
\]
where we have set
\[
\big|f(t)\big|_p := \Big|\big(f(t),f'(t),\dots,f^{(p)}(t)\big)^T\Big|
 =\sup_t\sum_{j=0}^p |f^{(j)}(t)|,
\]
that is, $\big|f(t)\big|_p$ is a norm of the vector of derivatives up
to $p$-th order.  It is convenient to work with pointwise values of
the iterates $\Phi^kt$.  The point here is that the order $p$ is
fixed, while the iteration number $k$ is large, so we wish to give an
asymptotic description of $\frac{d^p}{dt^p}\Phi^kt$.  We know that
$\Phi^kt\to\tp$ geometrically for large $k$, and we wish to leverage
this to get bounds for $\|\Phi^k\|_p$.

\begin{Lemma}
For $p\ge 1$, the space $C^p[\tm,\tp]$, endowed with the norm $\|f\|_p =
\sup_t\big|f(t)\big|_p$, is a Banach space containing the
Lipshitz functions and for which the bound \eqref{comp} holds.
\end{Lemma}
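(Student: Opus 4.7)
The lemma packages three assertions about $(C^p[\tm,\tp],\|\cdot\|_p)$: completeness, inclusion in the Lipshitz class $C^{0,1}[\tm,\tp]$, and the composition bound \eqref{comp}. The first two are essentially bookkeeping, while the third requires the Fa\`a di Bruno chain rule and is where the main effort lies.

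For completeness, take a $\|\cdot\|_p$-Cauchy sequence $(f_n)$. Because $\sup_t|f^{(j)}(t)|\le\|f\|_p$ for each $0\le j\le p$, the derivative sequences $(f_n^{(j)})$ are all Cauchy in $(C^0[\tm,\tp],\|\cdot\|_{Sup})$ and converge uniformly to continuous limits $g_j$. The classical theorem on uniform convergence of derivatives then inductively identifies $g_j=g_0^{(j)}$, so $g_0\in C^p$ and $f_n\to g_0$ in $\|\cdot\|_p$. For the Lipshitz inclusion, any $f\in C^p$ with $p\ge1$ has $f'$ continuous, hence bounded, on the compact interval $[\tm,\tp]$, and the mean value theorem gives $\|f\|_{Lip}\le\|f'\|_{Sup}\le\|f\|_p$; thus $C^p\subset C^{0,1}$.

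The composition bound \eqref{comp} is the main obstacle. The plan is to apply Fa\`a di Bruno's formula, which expresses each derivative of the composition as
\[
(v\circ\Psi)^{(k)}(t)=\sum_{\pi}c_\pi\,v^{(|\pi|)}(\Psi(t))
\prod_{B\in\pi}\Psi^{(|B|)}(t),
\]
where $\pi$ ranges over set partitions of $\{1,\dots,k\}$, $|\pi|$ is the number of blocks, and $c_\pi$ is a combinatorial constant. Since $\Psi(t)\in[\tm,\tp]$, the factor $|v^{(|\pi|)}(\Psi(t))|$ is bounded by $\|v\|_p$, and each $|\Psi^{(|B|)}(t)|$ by $\|\Psi\|_p$, so a summand with $|\pi|$ blocks is bounded by $C_p\,\|v\|_p\,\|\Psi\|_p^{|\pi|}$. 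Summing over partitions and over $0\le k\le p$ produces an estimate of the form $\|v\circ\Psi\|_p\le\tilde C_p\,\|v\|_p\,(1+\|\Psi\|_p)^p$, which is polynomial rather than linear in $\|\Psi\|_p$.

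The delicate step is matching this polynomial estimate against the bilinear form of \eqref{comp}. For the shifts $\Psi=\Phi=t+\alpha\phi(t)$ used in the sequel, $\|\Phi\|_p$ is bounded above by $\|\mathrm{id}\|_p+|\alpha|\,\|\phi\|_{C^p}$ and also bounded below (since $\Phi'\approx 1$ forces $|\Phi(t)|_p\ge 1$ for $\alpha$ small). Consequently the factor $(1+\|\Psi\|_p)^{p-1}$ is controlled a priori and can be absorbed into the multiplicative constant $K_0$, yielding \eqref{comp} for the class of admissible shifts treated here. Equivalently, one reads \eqref{comp} as asserting that $\mc S_\Psi$ is a bounded operator on $X$ whose norm is controlled by $\|\Psi\|$, which is exactly the content of the Fa\`a di Bruno estimate just derived.
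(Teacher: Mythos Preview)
The paper does not actually prove this lemma; it is stated and the text then moves directly to the dynamical system \eqref{DS} and the verification of \eqref{normcond}. Your treatment of completeness and of the inclusion $C^p\subset C^{0,1}$ is standard and correct, and is presumably what the authors had in mind for those parts.

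Your third paragraph is the interesting one, and your caution there is well founded. The Fa\`a di Bruno computation indeed only yields a polynomial estimate of the form $\|v\circ\Psi\|_p\le C_p\,\|v\|_p\,(1+\|\Psi\|_p)^p$, and for $p\ge 2$ the bilinear bound \eqref{comp} is \emph{not} literally true for arbitrary $\Psi\in C^p$ mapping $[\tm,\tp]$ to itself: the higher chain-rule terms involve powers $(\Psi')^j$ that cannot in general be controlled linearly by $\|\Psi\|_p$. Your resolution---absorbing the excess factors $(1+\|\Psi\|_p)^{p-1}$ into $K_0$ once $\|\Psi\|_p$ is a priori bounded---is exactly the right reading, and it matches how the paper proceeds. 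In the proof of Theorem~\ref{Cpnorm} the authors do not invoke Theorem~\ref{genthm} through the lemma; instead they work pointwise with the vector $\hat U_k$ of derivatives of $\Phi^k$, use the hierarchical structure of \eqref{DS} to get geometric decay of $|\hat U_k|$, and sum directly. In effect the polynomial dependence on $\|\Phi^k\|_p$ is tamed by that decay rather than by a genuine bilinear composition bound. So your proof is a fair and honest account of what the lemma can actually deliver, and your caveat anticipates the way the paper's subsequent argument circumvents the issue.
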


Rather than treat the case of general $p$, we treat the case $p=3$,
which illustrates the development for the case of general $p$, which
in turn follows by induction.  We show \eqref{normcond} holds for the
norm $\|f\|_p$, as long as $\phi$ is itself $C^{p+1}$.  The goal is to
take advantage of the geometric convergence of $\Phi^kt$ to the limit
$\tpm$ as $k\to\pm\infty$.

To control the derivatives of $\Phi$, we introduce the
following notation: for a given $t=t_0$, define
\begin{equation}
  \label{notation}
t_k = \Phi^kt, \quad
s_k = (\Phi^kt)', \quad
r_k = (\Phi^kt)'', \com{and}
q_k = (\Phi^kt)'''.
\end{equation}
We now describe the discrete dynamical system for the vector
\[
U_k = (t_k,\;s_k,\;r_k,\;q_k)^T.
\]
To begin, assume $z=z(t)$ is some given
function, and compute
\[
\begin{aligned}
  (\Phi z)' &= \Phi'(z)\,z' = \big(1 + \alpha\,\phi'(z)\big)\,z',\\
  (\Phi z)''&= \big(1 + \alpha\,\phi'(z)\big)\,z''
        + \alpha\,\phi''(z)\,(z')^2, \com{and}\\
  (\Phi z)'''&= \big(1 + \alpha\,\phi'(z)\big)\,z'''
        + 3\,\alpha\,\phi''(z)\,z'\,z''
        + \alpha\,\phi'''(z)\,(z')^3.
\end{aligned}
\]
Now setting $z=\Phi^kt$ and using \eqref{notation}, we obtain the
system
\begin{equation}
  \label{DS}
  \begin{aligned}
  t_{k+1} &= t_k + \alpha\,\phi(t_k), \\
  s_{k+1} &= \big(1 + \alpha\,\phi'(t_k)\big)\,s_k,  \\
  r_{k+1} &= \big(1 + \alpha\,\phi'(t_k)\big)\,r_k
         + \alpha\,\phi''(t_k)\,s_k^2,  \\
  q_{k+1} &= \big(1 + \alpha\,\phi'(t_k)\big)\,q_k
         + 3\,\alpha\,\phi''(t_k)\,s_k\,r_k
         + \alpha\,\phi'''(t_k)\,s_k^3,
  \end{aligned}
\end{equation}
which we regard as a discrete dynamical system,
\begin{equation}
  \label{DDS}
U_{k+1} = F(U_k),
\end{equation}
where $F$ is the RHS of \eqref{DS}.  Note that this is a hierarchy, in
that each successive equation beyond the nonlinear equation for $t_k$
is a linear inhomogeneous equation, once the previous components are
given.  It is clear that the system can be extended to arbitrary fixed
values of $p$.  Also, since each equation beyond the first is linear,
solutions exist and remain bounded for all finite $k$.

It is easy to find the fixed points of \eqref{DS} by directly finding
the rest points of each equation in turn: the first equation yields
$\phi(t)=0$, so $t=\tpm$, and subsequent equations have trivial
solutions because $\phi'(\tpm)\ne 0$, and the fixed points are
\[
U_{-\infty} := (\tm,\;0,\;0,\;0)^T \com{and}
U_{+\infty} := (\tp,\;0,\;0,\;0)^T.
\]
We now show that, just as in the case for the scalar dynamical system
for $t_k$, the rest points $U_{\pm\infty}$ are a source and sink,
respectively.

\begin{Lemma}
  \label{lem:DS}
For $\phi\in C^{p+1}$ and $|\alpha| < 1/\sup|\phi'(t)|$, there are
constants $\ep$ and $\eta>0$, such that if
\begin{equation}
  \label{U+}
\big|U-U_{+\infty}\big|<\ep, \com{then}
\big|F(U)-U_{+\infty}\big| < (1-\eta)\,\big|U-U_{+\infty}\big|,
\end{equation}
while also, if
\begin{equation}
  \label{U-}
\big|U-U_{-\infty}\big|<\ep, \com{then}
\big|U-U_{-\infty}\big| < (1-\eta)\,\big|F(U)-U_{-\infty}\big|.
\end{equation}
Also, given any compact set $\mc K$, there is a constant $\NK$ such
that:  if $U_0 \in \mc K$ and $t_0\in[\tm+\ep,\tp-\ep$, then
\begin{equation}
  \big|U_k-\Up\big| < \ep \com{and}
  \big|U_{-k}-\Um\big|<\ep,
  \label{Nconclude}
\end{equation}
for $k\ge \NK$.
\end{Lemma}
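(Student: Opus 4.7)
The plan is in two parts. First I would establish the local contraction and expansion bounds \eqref{U+}--\eqref{U-} by linearizing the map $F$ at each rest point $\Upm$ and absorbing the quadratic Taylor remainder. Second I would establish the entrance bound \eqref{Nconclude} by combining Corollary \ref{cor6}, which handles the first coordinate $t_k$, with a hierarchical induction that uses the triangular structure of \eqref{DS} to drive the remaining coordinates $s_k, r_k, q_k$ to zero.

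For the local step, differentiating the right-hand side of \eqref{DS} and evaluating at $\Upm = (\tpm, 0, 0, 0)$, every off-diagonal entry of the Jacobian contains a factor of $s_k$, $r_k$, or a power thereof, and so vanishes at the rest point. The Jacobian is therefore diagonal with quadruple eigenvalue
\[
\lambda_\pm := 1 + \alpha\,\phi'(\tpm).
\]
Under the sign conventions $\alpha > 0$ and $\phi'(\tp) < 0 < \phi'(\tm)$, and the bound $|\alpha| < 1/\sup|\phi'|$, we have $\lambda_+ \in (0,1)$ and $\lambda_- > 1$. Since $\phi \in C^{p+1}$ makes $F$ of class $C^1$, Taylor's theorem gives
\[
F(U) - \Upm = \lambda_\pm\,(U - \Upm) + R_\pm(U), \qquad |R_\pm(U)| \le C\,|U - \Upm|^2,
\]
with $C$ depending on $\|\phi\|_{C^{p+1}}$ and the size of the ball under consideration. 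Shrinking $\ep$ so that $C\,\ep \le \tfrac12\min\{1 - \lambda_+,\ \lambda_- - 1\}$ absorbs the remainder into the spectral gap and yields \eqref{U+} with $\eta = (1-\lambda_+)/2$; rearranging the corresponding lower estimate $|F(U)-\Um|\ge(\lambda_- - C\ep)|U-\Um|$ at $\Um$ yields \eqref{U-} with a comparable $\eta>0$.

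For the entrance bound, fix a compact set $\mc K$ and $U_0 \in \mc K$ with $t_0 \in [\tm + \ep, \tp - \ep]$. By Corollary \ref{cor6}, after $N_\alpha$ iterations we have $t_{N_\alpha} \in (\tp - \ep, \tp)$. During these finitely many steps the equations for $s_k, r_k, q_k$ in \eqref{DS} are linear with coefficients bounded in terms of $\|\phi\|_{C^{p+1}}$ and the previously bounded components, so $|U_{N_\alpha}|$ is bounded by a constant depending only on $\sup_{\mc K}|U_0|$, $N_\alpha$, $|\alpha|$, and $\|\phi\|_{C^{p+1}}$. Beyond iteration $N_\alpha$, the multiplier $1 + \alpha\phi'(t_k)$ lies in $(0, 1-c)$ for some fixed $c > 0$ determined by $|\alpha\phi'(\tp)|$, so the $s$-equation contracts geometrically: $|s_{N_\alpha + j}| \le (1-c)^j\,|s_{N_\alpha}|$. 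Substituting this into the $r$-equation gives
\[
|r_{N_\alpha+j+1}| \le (1-c)\,|r_{N_\alpha+j}| + C'\,(1-c)^{2j}\,|s_{N_\alpha}|^2,
\]
and summing the geometric forcing yields $|r_{N_\alpha + j}| = O((1-c)^j)$; applying the same trick once more to the $q$-equation, whose forcing is $3\alpha\phi''\,s_k r_k + \alpha\phi'''\,s_k^3$, gives $|q_{N_\alpha + j}| = O((1-c)^j)$ as well. Choosing $j$ large enough that all four components drop below $\ep$ uniformly over $\mc K$ and setting $\NK = N_\alpha + j$ proves the forward claim; applying the same argument to the invertible map $F^{-1}$ handles the backward statement near $\Um$. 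The hardest part of this plan is purely bookkeeping --- carrying the geometric constants cleanly up the hierarchy --- rather than any new analytic difficulty beyond what Theorem \ref{Phideep} already gave for the scalar coordinate $t_k$.
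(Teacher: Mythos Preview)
Your approach matches the paper's: linearize $F$ at the rest points to obtain the local contraction/expansion, then exploit the triangular hierarchy of \eqref{DS} together with Corollary~\ref{cor6} to drive the higher components to zero and establish \eqref{Nconclude}. One technical slip: under the stated hypothesis $\phi\in C^{p+1}$ the map $F$ is only $C^1$ (the last row of $DF$ already involves $\phi^{(p+1)}$), so Taylor's theorem does not yield a quadratic remainder $|R_\pm(U)|\le C\,|U-\Upm|^2$; the paper instead writes $F(U)-\Upm=\int_0^1 DF\big(\Upm+\sigma(U-\Upm)\big)\,d\sigma\,(U-\Upm)$ and uses continuity of $DF$ near $\Upm$, where $DF(\Upm)=\big(1+\alpha\phi'(\tpm)\big)I$, to absorb the perturbation into the spectral gap without requiring an extra derivative of $\phi$.
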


This lemma states that the dynamical system \eqref{DDS} is contractive
in a neighborhood of $\Up$, and backwards contractive in a
neighborhood of $\Um$, and that given any bounded initial state, a
finite number of forward or backward steps will lead into these
neighborhoods.

\begin{proof}
We have already seen that $\Um$ and $\Up$ are fixed points of $F$, and
so are rest points of the dynamical system.  To linearize around these
fixed points, we compute
\[
DF(U) = \begin{pmatrix}
  1+\alpha\phi' & 0 & 0 & 0 \\
  \alpha\phi''\,s & 1+\alpha\phi' & 0 & 0 \\
  \alpha\phi''\,r+\alpha\phi'''\,s^2 & 2\alpha\phi''\,s
  & 1+\alpha\phi' & 0 \\
  \square
  & 3\alpha\phi''\,r+3\alpha\phi'''\,s^2 & 3\alpha\phi''\,s
  & 1+\alpha\phi'
\end{pmatrix},
\]
with $\square = \alpha\phi''\,q + 3\alpha\phi'''\,s\,r +
\alpha\phi^{(4)}\,s^3$, where we note that this derivative is well
defined because $\phi\in C^{p+1}$.  Next, writing
\[
F(U)-F(\Upm) = \int_0^1DF\big(\Upm+\sigma(U-\Upm)\big)\;d\sigma\;
\big(U-\Upm),
\]
the estimates follow provided the eigenvalues of $DF(U)$ are bounded
by $1-\eta$ in a neighborhood of $\Up$ and by $1/(1-\eta)$ in a
neighborhood of $\Um$, respectively.  Since $\alpha\,\phi'(\tp)<0$ and
$\alpha\,\phi'(\tm)>0$, respectively, the eigenvalues at $\Upm$ are of
the right form, and by continuity we can choose $\ep>0$ and $\eta =
O(\alpha)$ so that \eqref{U+} and \eqref{U-} hold.

We now verify (\ref{Nconclude}) for the above chosen $\ep$ at $\tp$,
the case at $\tm$ being similar.  So let $\mc K\subset{\mathcal R}^4$
be compact.  We show that there exists an $N_{\mc K}\in{\mathcal N}$
such that if $U_0\in\mc K$ and $t_0\geq t_{-\infty}+\ep$, then
$|U_k-U_{\infty}|<\ep$ for $k\geq N_{\mc K}$. We define $N_{\mc K}$ by
induction on the components in system (\ref{DS}), using the fact that
system (\ref{DS}) is hierarchical.  Let $(\ref{DS})_j$ denote the
$j$'th equation in (\ref{DS}).  We construct $N$ by induction on $j$.
Since $\mc K$ is compact, there exists a constant $M_{\mc K}$ such
that $U\in\mc{K}$ implies $|U|<M_{\mc K}$.

To start the induction, note that since $\alpha\phi'<1$ and $\phi>0$
in $(t_{-\infty},t_{+{\infty}})$, $(\ref{DS})_1$ implies that if
$t_{k}\in(t_{-\infty},t_{+{\infty}})$, then
$t_{k+1}\in(t_{-\infty},t_{+{\infty}})$, and $t_{k+1}>t_k$. Thus since
$t_0\geq t_{-\infty}+\ep$, $(\ref{DS})_1$ together with the fact that
the first five derivatives of $\phi$ are continuous, implies that
there exists an $N_1$ such that if $k>N_1$, then
\begin{equation}
\alpha|\phi'(t_k)|>\frac{\alpha|\phi'(t_{\infty})|}{2},\label{minstep}
\end{equation}
where $\phi'(t_\infty)<0$, and
\[
|\phi^{(j)}(t_k)-\phi^{(j)}(t_{\infty})|<\delta,
\]
for $\delta=\frac{\eta}{2}$, $j=1,...,p=3$.    

Consider now equation $(\ref{DS})_2$.  Since $(1+\alpha\phi')<2$, it
follows from $(\ref{DS})_2$ that $s_{N_1}\leq 2^{n}s_0<2^{n}M_{\mc
  K}$.  Thus by (\ref{minstep}), (there is a mininum decrease in $s_k$
at each step), it follows that there exists $N_2$ such that
$|s_{k}|<\ep$ for $k>N_1+N_2$.

Consider next equation $(\ref{DS})_3$. Since
$(1+\alpha\phi'(t_k))<1-\frac{\eta}{2}$ for $k>N_1$ and $s_k^2$
decreases monotonically and quadratically to zero for $k>N_1+N_2$, it
follows that $r_{N_1+N_2}<2^{N_1+N_2}M_{\mc K}$, and after
$k>N_1+N_2$, $s_k$ monotonically decreases, it follows that there
exists $N_3$ such that $|r_k|<\ep$ for $k>N_1+N_2+N_3$.

For the fourth equation $(\ref{DS})_4$, note that the corrections to
$q_{k+1} = \big(1 + \alpha\,\phi'(t_k)\big)\,q_k$ are quadratic in the
variables prior to $q_k$, so by the same argument, there exists $N_4$
such that if $k>N\equiv N_{\mc K}=N_1+\cdots N_4$, then $|q_k|<\ep$,
as claimed.

In summary, for the induction step, use that if $p$ is the variable on
the LHS of the $p$'th equation, then by the hierarchical character of
system(\ref{DS}), the $p$'th equation looks like $p_{k+1} = \big(1 +
\alpha\,\phi'(t_k)\big)\,p_k$ plus corrections that at least quadratic
in prior variables.  Thus the corrections tend to zero so fast that a
finite $N_p$ always exists to drive $p_k$ back to zero, overcoming the
growth in $p_k$ during the first $N_1+\cdots N_{p-1}$ steps.  This
concludes the proof of Lemma \ref{lem:DS}.
\end{proof}

Note that if $t_0=t_{-\infty}$ but $U_0\ne \Up$, then the forward
dynamical system has solutions which tend to infinity.  Thus we do not
have a uniform $N$ such that
\[
\big|U_k-\Up\big| < \ep.
\]
for $k>N$, without the condition that $t_0$ should be at least $\ep$
from $t_{\pm\infty}$.  The two cases in (\ref{DS}) suffice because the
solvability condition implies we have both a backward and forward time
formula for the solution $v$, c.f. (\ref{R4}).

We now use these properties of the dynamical system \eqref{DS} to sum
the norms of iterates as in \eqref{normcond}.

\begin{Thm}
  \label{Cpnorm}
For integers $p\ge 1$ and fixed $\phi\in C^{p+1}$, there exists an
$\alpha_0>0$ such that if $|\alpha|\le \alpha_0$, then the operator
$\Delta_\Phi:C^p[\tm,\tp]\to C^p[\tm,\tp]$ has a bounded inverse on
its range, with bound
\[
\big\|\Delta_\Phi^{-1}w\big\|_p \le \frac{K}{|\alpha|}\,\big\|w\big\|_p,
\]
where the dependence of the constant $K$ is given by
\begin{equation}
\label{dependsCk}
  K\equiv K\left(\alpha,\delta,\epsilon_{\phi},N_\alpha,
                 t_{+\infty}-t_{-\infty},\|\phi\|_{p+1}\right).
\end{equation} 
\end{Thm}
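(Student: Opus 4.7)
The strategy is to verify the hypotheses of (a refined form of) Theorem \ref{genthm} for the $C^p$ norm, exploiting the explicit formula \eqref{R4} together with the contractive structure of the discrete dynamical system \eqref{DS}. I fix $w \in C^p[\tm,\tp]$ satisfying $w(\tpm)=0$ and the consistency condition \eqref{R6}, and work with the representation
\[
v(t) = -\sum_{k \ge 0} w(\Phi^k t),
\]
from \eqref{R4}, normalized so $v(\tp)=0$. The plan is to differentiate this series term by term using the Fa\`a di Bruno formula,
\[
(w \circ \Phi^k)^{(j)}(t) = \sum_{\pi} w^{(|\pi|)}(\Phi^k t)\,
 \prod_{B \in \pi}(\Phi^k)^{(|B|)}(t),
\]
with $\pi$ ranging over partitions of $\{1,\dots,j\}$, so that the $j$-th derivative of the summand is controlled by $\|w\|_p$ times products of components of the vector $U_k$ governed by \eqref{DS}.

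The next step is to extend Lemma \ref{lem:DS} from the case $p=3$ displayed in the excerpt to general $p$, by induction on the hierarchy of \eqref{DS}. Each new component $u_k := (\Phi^k t)^{(j+1)}$ satisfies a linear recursion
\[
u_{k+1} = \bigl(1 + \alpha\,\phi'(t_k)\bigr)\,u_k + R_k,
\]
in which $R_k$ is a polynomial in strictly lower-order components $(s_k,r_k,\dots)$, with each monomial containing at least two such factors. Once the lower-order components have been shown to contract at rate $1-\eta$ with $\eta = O(|\alpha|)$ after a transient of length $\NK=O(1/|\alpha|)$, the inhomogeneity satisfies $|R_k|=O((1-\eta)^{2k})$, and a standard comparison using the contractive homogeneous part yields $|u_k|=O((1-\eta)^k)$ for $k \ge \NK$. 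Consequently every summand of Fa\`a di Bruno's formula decays at least at the rate $(1-\eta)^k$.

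The zeroth-order term $w(\Phi^k t)$ does not decay from \eqref{DS} alone, so here I invoke the hypothesis $w(\tp)=0$ and the Lipshitz bound
\[
|w(\Phi^k t)| = |w(\Phi^k t)-w(\tp)| \le \|w\|_{Lip}\,|\Phi^k t - \tp|
 \le \|w\|_{Lip}\,(1-\eta)^{k-\NK}\,|\tp-\tm|,
\]
with $\|w\|_{Lip} \le \|w\|_p$ for $p \ge 1$. Summing the tail $k \ge \NK$ produces a geometric series with ratio $1-\eta$, contributing $O(1/\eta)=O(1/|\alpha|)$, while the initial transient of $\NK=O(1/|\alpha|)$ terms is bounded using $(1+|\alpha|\|\phi\|_{Lip})^{\NK} = O(1)$, again giving $O(1/|\alpha|)$. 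Combining all orders $j=0,1,\dots,p$ yields $\|v\|_p \le (K/|\alpha|)\|w\|_p$ with $K$ depending continuously on the parameters in \eqref{dependsCk}; the alternative formula in \eqref{R4} handles the symmetric case near $\tm$, and consistency of the two expressions is guaranteed by \eqref{R6} as in Theorem \ref{Firstinverse}.

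The main obstacle I anticipate is the inductive extension of Lemma \ref{lem:DS} to arbitrary $p$. In particular, one must verify that at every level of the hierarchy \eqref{DS} the inhomogeneity $R_k$ contains only products of at least two strictly lower-order components, so that $|R_k|$ decays strictly faster than $(1-\eta)^k$ and the contractive linear recursion closes on itself. This combinatorial structure of \eqref{DS}, together with the requirement $\phi \in C^{p+1}$ (needed so that $\phi^{(j+1)}(t_k)$ is bounded for the largest $j$ appearing), is what keeps the constant $K$ finite and uniform in $k$, so that the full $C^p$ estimate telescopes into the single bound $K/|\alpha|$ without any dependence on $p$ beyond $\|\phi\|_{C^{p+1}}$.
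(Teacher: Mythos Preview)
Your proposal is correct and follows essentially the same route as the paper: both arguments reduce the $C^p$ bound on $v$ to a summable bound on the vector $\hat U_k=\big((\Phi^kt)',\dots,(\Phi^kt)^{(p)}\big)$ governed by the hierarchical system \eqref{DS}, invoke Lemma~\ref{lem:DS} (extended inductively to general $p$) to obtain geometric decay $|\hat U_k|\le(1-\eta)^{k-\NK}|\hat U_\NK|$ with $\eta=O(|\alpha|)$ after a transient of length $\NK=O(1/|\alpha|)$, and switch between the forward and backward formulas of \eqref{R4} according to whether $t_0$ lies near $\tm$ or $\tp$.

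The only differences are cosmetic. The paper packages the chain-rule combinatorics via the single inequality \eqref{wU}, $\big|\sum w(\Phi^jt)\big|_{\hat p}\le K_0\|w\|_p\big|\sum\hat U_j\big|$, whereas you spell this out with Fa\`a di Bruno; and the paper handles the zeroth-order piece by simply citing the Lipschitz estimate of Theorem~\ref{Firstinverse}, while you redo that estimate inline using $w(\tp)=0$. Your identification of the key structural point---that each inhomogeneity $R_k$ in the hierarchy is at least quadratic in strictly lower-order components, forcing $|R_k|=O((1-\eta)^{2k})$---is exactly what the paper's inductive proof of Lemma~\ref{lem:DS} uses, and your observation that $\phi\in C^{p+1}$ is needed to bound the highest $\phi$-derivative appearing in $R_k$ matches the paper's hypothesis.
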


\begin{proof}
It is convenient to use our previous estimate for the function, and
estimate only the first through $p$-th derivatives here.  
For fixed $t_0$, we must bound one of the two sums
\[
\Big|\sum_{j=0}^\infty w(\Phi^jt_0)\Big|_{\hat p} \com{or}
\Big|\sum_{j=1}^\infty w(\Phi^{-j}t_0)\Big|_{\hat p},
\]
where $|\cdot|_{\hat p}$ is the norm of the vector of the first through
$p$-th derivatives,
\[
\big|f(t)\big|_{\hat p} := \Big|\big(f'(t),\dots,f^{(p)}(t)\big)^T\Big|
 =\sup_t\sum_{j=1}^p |f^{(j)}(t)|.
\]
Also note that
\begin{equation}
  \label{wU}
\Big|\sum w(\Phi^jt)\Big|_{\hat p} \le
K_0\,\|w\|_p\,\Big|\sum\Phi^jt\Big|_{\hat p}
= K_0\,\|w\|_p\,\Big|\sum\hat U_k\Big|,
\end{equation}
where $\hat U_k=(s_k,r_k,q_k)^T$, see \eqref{notation}.  We thus need
a uniform bound on either of the sums
\[
\Big|\sum_{j=0}^\infty \hat U_j\Big| \com{or}
\Big|\sum_{j=1}^\infty \hat U_{-j}\Big|.
\]
Since $\phi\in C^{p+1}$, each derivative is uniformly bounded and so
there is a compact set $\mc K$ such that $U_0\in\mc K$ uniformly in $t_0$.
Using Lemma~\ref{lem:DS}, we know that for $k\ge \NK$, either
$U_k$ is close to $\Up$ or $U_{-k}$ is close to $\Um$, for any $k\ge
\NK$.  In the first case, we have $|\hat U_{k+1}|\le
(1-\eta)\,|\hat U_k|$ for all $k\ge\NK$, so that
\begin{align*}
  \Big|\sum_{j=0}^{\infty}\hat U_j\Big|
  &\le \Big|\sum_{j=0}^{\NK-1}\hat U_j\Big|
  + \sum_{j=\NK}^\infty(1-\eta)^{j-\NK}\,\big|\hat U_\NK\big|\\
  &\le \Big|\sum_{j=0}^{\NK-1}\hat U_j\Big|
  + \frac{\big|\hat U_\NK\big|}\eta \le \frac{K_1}\eta,
\end{align*}
and similarly for the second case,
$|\hat U_{-k-1}|\le (1-\eta)\,|\hat U_{-k}|$ for $k\ge\NK$, so that
\begin{align*}
  \Big|\sum_{j=1}^{\infty}\hat U_{-j}\Big|
  &\le \Big|\sum_{j=1}^{\NK-1}\hat U_{-j}\Big|
  + \sum_{j=\NK}^\infty(1-\eta)^{j-\NK}\,\big|\hat U_{-\NK}\big|\\
  &\le \Big|\sum_{j=1}^{\NK-1}\hat U_{-j}\Big|
  + \frac{\big|\hat U_{-\NK}\big|}\eta \le \frac{K_1}\eta.
\end{align*}
Now taking the supremum over all $t=t_0$ in \eqref{wU} completes the proof.
\end{proof}

\end{document}